\def\beq{\begin{equation}}
\def\eeq{\end{equation}}
\def\ba{\begin{array}}
\def\ea{\end{array}}
\def\R{\mathbb R}
\newtheorem{thm}{Theorem}[section]
\newtheorem{lm}[thm]{Lemma}
\theoremstyle{definition}
\newtheorem{rem}[thm]{Remark}
\newtheorem{df}[thm]{Definition}
\newtheorem{assump}{Assumption}
\theoremstyle{remark}
\begin{document}
\pagestyle{plain}\today
\title{Interior pointwise $C^{1}$ and $C^{1,1}$ regularity of solutions for general semilinear elliptic equation in nondivergence form}

\author{\small{Jingqi Liang}\\
 \small{Institute of Natural Sciences, Shanghai Jiao Tong University}\\
 \small{Shanghai, China}}

\begin{abstract}
In this paper, we obtain $C^{1}$ and $C^{1,1}$ regularity of $L^{n}$-viscosity solutions for general semilinear elliptic equation in nondivergence form under some more weaker assumptions, which generalize the result for equations with nonhomogeneous term $f(x)$ to $f(x,u)$. In particular, the nonhomogeneous term $f(x,u)$ is assumed optimally to satisfy unform Dini continuity condition in $u$ and modified $C^{1,1}$ Newtonian potential condition in $x$. For unbounded coefficients, if $a_{ij}$ is $C_{n}^{-1,1}$ at $x_{0}\in\Omega$ with small modulus, $b_{i}\in L^{q}(\Omega)$ for some $q>n$, the solution is $C^{1}$ at $x_{0}$. Furthermore, if $a_{ij},~b_{i}$ are Dini continuous at $x_{0}$, the solution is $C^{1,1}$ at $x_{0}$.
\end{abstract}

\maketitle

{\bf Keywords: Interior pointwise $C^{1}$ and $C^{1,1}$ regularity, $L^{n}$-viscosity solution, Semilinear elliptic equation, Dini condition}

\section{Introduction}
In this paper, we study the pointwise interior $C^{1}$ and $C^{1,1}$ regularity of $L^{n}$-viscosity solutions for the semilinear elliptic equation in nondivergence form. Suppose that $u$ is a solution of
\begin{equation}\label{eq1}
a_{ij}D_{ij}u+b_{i}D_{i}u=f(x,u)\quad\text{in}~~\Omega,
\end{equation}
where $\Omega$ is a bounded domain and $B_{2}\Subset\Omega$, the coefficients $a_{ij},~b_{i}(1\leq i,j\leq n)$ are measurable functions on $\Omega$,  $(a_{ij}(x))_{n\times n}$ is symmetric and satisfies the uniform ellipticity condition with the ellipticity constant $0<\Lambda\leq1$:
\begin{equation}\label{aij}
\Lambda|\xi|^{2}\leq \sum\limits_{ij}a_{ij}(x)\xi_{i}\xi_{j}\leq\Lambda^{-1}|\xi|^{2},~~\forall x\in \Omega,~~\xi\in \mathbb{R}^{n},
\end{equation}
and $b_{i}\in L^{q}(\Omega)(q>n)$ with
  \begin{equation}\label{b}
  \sum\limits_{i}\|b_{i}\|_{L^{q}(\Omega)}\leq\Lambda_{1},
  \end{equation}
for some constant $\Lambda_{1}$.

In the past two decades, many scholars studied $C^{1,1}$ regularity of solutions for the following semilinear elliptic equation:
\begin{equation}\label{semi}
\Delta u=f(x,u),\quad x\in B_{1}.
\end{equation}
Especially, if $f(x,u)=\chi_{\{u\neq0\}}$ where $\chi_{\{u\neq0\}}$ is the characteristic function of the set $\{u\neq0\}$, $(\ref{semi})$ is classical obstacle problem. More generally, the case $f(x,u)=f(x)\chi_{\{u\neq0\}}$ has been considered. For nonnegative solutions, it is classical that $u\in C^{1,1}(B_{\frac{1}{2}})$ if $f\in C^{\text{Dini}}(B_{1})$, see \cite{BL2001,Ca1998}. For the no-sign solution, Petrosyan and Shahgholian \cite{PS2007} proved $C^{1,1}$ regularity of $u$ when $f$ satisfies a double Dini condition. Later, Andersson, Lindgren and Shahgholian gave the weakest assumption to get the interior $C^{1,1}$ regularity, that is $f\ast N\in C^{1,1}(B_{1})$, where $N$ is the Newtonian potential. In addition, they proposed a natural question: whether $C^{1,1}$ regularity holds for $D_{j}(a_{ij}D_{i}u)=f(x)\chi_{\{u\neq0\}}$ under proper assumption on $a_{ij}$?

There are also some regularity results for the general case. Shahgholian \cite{6} showed the interior $C^{1,1}$ regularity of $u$ under the conditions that $f(x,u)$ is Lipschitz continuous in $x$, uniformly in $u$, and $\partial_{u}f\geq-C$. Koch and Nadirashvili \cite{KN15} gave an example to illustrate that the continuity of $f$ is not sufficient to deduce that weak solution of $\Delta u=f(u)$ is $C^{1,1}$.

Based on above results, in 2017, Indrei, Minne and Nurbekyan \cite{1} proved the interior $C^{1,1}$ regularity of weak solution $u$ for $(\ref{semi})$ under the almost optimal assumptions that $f(x,u)$ satisfies the uniform Dini continuity condition in $u$ and the uniform $C^{1,1}$ Newtonian potential condition in $x$. Inspired by this work, we considered in \cite{LWZ2023,LWZ2022} the pointwise Lipschitz regularity up to the boundary for the following semilinear elliptic equation in divergence form:
$$-D_{j}(a_{ij}D_{i}u)+b_{i}D_{i}u=-\text {div}\overrightarrow{\mathbf{F}}(x,u)\qquad\text{in}~~\Omega,
$$
with boundary value $u=g$ on $\partial\Omega$. In particular, in \cite{LWZ2023}, we began with the simple case: $a_{ij}=\delta_{ij},~b_{i}=0$, where $\{\delta_{ij}\}_{n\times n}$ is identity matrix. We gave an assumption on $\overrightarrow{\mathbf{F}}(x,u)$ which is parallel to Indrei's: $\overrightarrow{\mathbf{F}}(x,u)$ satisfies uniform Dini continuity condition in $u$ and uniform Lipschitz Newtonian potential condition in $u$. We established that, if $\partial\Omega$ is $C^{1,\text{Dini}}$ or Reifenberg $C^{1,\text{Dini}}$ at some boundary point, the solution is Lipschitz continuous at this point. Furthermore, if $a_{ij}$ satisfies Dini decay condition and $b_{i}\in L^{q}$ for some $q>n$, the same result also holds for the general equation, see \cite{LWZ2022}.

The above researches focus on the $C^{1}$ and $C^{1,1}$ regularity of weak solutions for semilinear equations in divergence form. But how about the regularity of strong solutions or viscosity solutions for general semilinear equation in nondivergence form $(\ref{eq1})$? Even interior regularity is unknown. This paper will answer this question after assuming suitable conditions on $a_{ij},~b_{i}$ and $f(x,u)$. For nonhomogeneous term $f(x,u)$, we reserve the uniform Dini continuity condition in $u$ and modify the $C^{1,1}$ Newtonian potential condition in $x$ slightly, i.e. we give the  following assumptions:

\begin{assump}\label{as1} Assume $f(x,t)\in L^{\infty}(\Omega\times \R)$. Moreover
$f(x,t)$ is Dini continuous in $t$ with continuity modulus $\varphi(r)$, uniformly in $x$, i.e.
$$\left|f(x,t_{2})-f(x,t_{1})\right|\leq\varphi(|t_{2}-t_{1}|),
$$
and $\displaystyle\int_{0}^{t_{0}} \displaystyle\frac{\varphi(t)}{t} dt<\infty$ for some $t_{0}>0.$
\end{assump}
\begin{assump}\label{as2}
For each $x_{0}\in B_{1}$ and each $t\in \R$, there exists a function $v_{x_{0}}(\cdot,t)$ in $B_{1}(x_{0})$ satisfying
$$
a_{ij}(x_{0})D_{ij}v_{x_{0}}(\cdot, t)=f(\cdot,t) \quad \text{in}~~B_{1}(x_{0}).
$$
Furthermore, $v_{x_{0}}(\cdot,t)$  is a $C^{1,1}$ function which is uniform in $x_0$ and $t$, that is there exists a constant $T$ such that for any $x_{0}\in B_{1}$, $a,b\in\mathbb{R}$,
\begin{eqnarray*}
\sup_{a\leq t\leq b}\|D^{2}v_{x_{0}}(\cdot, t)\|_{L^{\infty}(B_{1}(x_{0}))}\leq T.
\end{eqnarray*}
\end{assump}
\begin{rem}
In the sequel, for each $x_{0}\in B_{1}$, we let $v_{x_{0}}(x)$ solve
$$a_{ij}(x_{0})D_{ij}v_{x_{0}}(x)=f(x,u(x_{0})) \quad \text{in}~~B_{1}(x_{0}).$$
\end{rem}

%
%
%
%
%

Next we need to propose some weaker assumptions on all $a_{ij}$ and $b_{i}$. For the linear equation only with the leading terms
$$a_{ij}D_{ij}u=f(x)\quad \text{in}~~\Omega,
$$
it is well-known that Caffarelli in \cite{Ca} proved that if $a_{ij}$ are uniformly close to constant and
$f$ has controlled growth, u is a solution of class $C^{1,\alpha}$, and  if $a_{ij}$ are $C^{\alpha}$ and $f$ is $C^{\alpha}$, then $u$ belongs to $C_{\text{loc}}^{2,\alpha}$. The similar results were generalized to fully nonlinear equations in 1989 \cite{Ca1989}. In 2017, Dong and Kim \cite{DK2017} proved the interior $C^{2}$ regularity of the $W^{2,2}$ strong solution for above equation by Campanato method under the assumptions that $a_{ij}$ and $f(x)$ have $L^{1}$ Dini mean oscillation. Moreover, for general equation with lower order term
\begin{equation}\label{aijge}
a_{ij}D_{ij}u+b_{i}D_{i}u=f(x)\quad \text{in}~~\Omega,
\end{equation}
if all $b_{i}$ have Dini mean oscillation as well, the same result still holds. The global $C^{2}$ regularity was also obtained by supposing $\partial\Omega\in C^{2,\text{Dini}}$ for zero-value Dirichlet problem in \cite{DEK2018}. In 2019, Huang, Zhai and Zhou \cite{HZZ2019} considered the boundary $C^{1}$ regularity of $W_{\text{loc}}^{2,n}(\Omega)\cap C(\overline{\Omega})$ strong solution for $(\ref{aijge})$ with zero boundary value. They proved that if $\partial\Omega$ is $C^{1,\text{Dini}}$, $b_{i}$ and $f$ are $C_{n}^{-1,\text{Dini}}$ at boundary point $x_{0}$, then $u$ is differentiable at $x_{0}$. Later Lian, Wang and Zhang \cite{LWZarxiv} focused  on the $L^{n}$-viscosity solution of fully nonlinear elliptic equation and gave an comprehensive regularity result for $(\ref{aijge})$ even up to the boundary, including pointwise $C^{k,\alpha}$, $C^{k}$ regularity and so on. They showed that if $a_{ij}$ is $C_{n}^{-1,1}$ at $x_{0}$ with small modulus, $b_{i}\in L^{q}$ where $q>n$, $f\in C_{n}^{-1,\text{Dini}}(x_{0})$ for $x_{0}\in\Omega$, then the viscosity solution is $C^{1}$ at $x_{0}$. If $a_{ij},~b_{i},~f\in C^{\text{Dini}}(x_{0})$, then $u\in C^{2}(x_{0})$. For more results about interior and boundary $C^{1}$ and $C^{2}$ regularity of solutions for the equations with nonhomogeneous term $f(x)$, we refer readers to \cite{Bu,4,5,K1997,7,13,14,15,18,16,3,2}.

In this paper, we continue to study the regularity of solutions under some more weaker assumptions. Let us  begin with some definitions.

\begin{df}\label{as3-1} Let $x_{0}\in B_{1}$, we say $a_{ij}\in L^{n}(\Omega)$ is $C_{n}^{-1,1}$ at $x_{0}$ with small modulus $\nu$, if there exists a positive constant $r_{0}\leq1$ and a sufficiently small constant $\nu$ such that for any $0<r\leq r_{0}$,
$$\|a_{ij}(x)-a_{ij}(x_{0})\|_{L^{n}(B_{r}(x_{0}))}\leq\nu r.
$$
We denote $\|a_{ij}\|_{C_{n}^{-1,1}(x_{0})}=\nu$.
\end{df}

\begin{df}\label{as3} Let $x_{0}\in B_{1}$, we say $a_{ij}\in L^{\infty}(\Omega)$ is $C^{\text{Dini}}$ at $x_{0}$, if there exists a positive constant $r_{0}\leq1$ and a Dini modulus of continuity $\omega_{1}(r)$ satisfying $\displaystyle\int_{0}^{r_{0}} \displaystyle\frac{\omega_{1}(r)}{r} d r<\infty$ such that for any $0<r\leq r_{0}$,
    $$\|a_{ij}(x)-a_{ij}(x_{0})\|_{L^{\infty}(B_{r}(x_{0}))}\leq\omega_{1}(r).$$
\end{df}


\begin{df}\label{as5} Let $x_{0}\in B_{1}$, we say $b_{i}\in L^{\infty}(\Omega)$ is $C^{\text{Dini}}$ at $x_{0}$, if there exists a positive constant $r_{0}\leq1$ and a Dini modulus of continuity $\omega_{2}(r)$ satisfying $\displaystyle\int_{0}^{r_{0}}\displaystyle\frac{\omega_{2}(r)}{r} d r<\infty$ such that for any $0<r\leq r_{0}$,
    $$\|b_{i}(x)-b_{i}(x_{0})\|_{L^{\infty}(B_{r}(x_{0}))}\leq\omega_{2}(r).$$
\end{df}

\begin{df}
Let $x_{0}\in B_{1}$, $f:\Omega\rightarrow\mathbb{R}$ be a function. We say that $f$ is $C^{k}(k\geq0)$ at $x_{0}\in B_{1}$ if there exists a $k$-th order polynomial $P_{x_{0}}$ such that
\begin{eqnarray*}
\sup_{|x-x_{0}|\leq r}\frac{|f(x)-P_{x_{0}}(x)|}{|x-x_{0}|^{k}}\rightarrow0~\text{as}~r\rightarrow0.
\end{eqnarray*}
\end{df}
 \begin{rem}
Any modulus of continuity $\omega(t)$ is non-decreasing, subadditive, continuous and satisfies $\omega(0)=0$. Hence any modulus of continuity $\omega(t)$ satisfies
\begin{equation}\label{diniproperty}\frac{\omega(r)}{r}\leq2\frac{\omega(h)}{h},\quad 0<h<r.
\end{equation}
For more details about the properties of modulus of continuity, we refer readers to \cite{K1997,7,2}.
\end{rem}

Our main Theorems are following:
\begin{thm}\label{mr1}Assume that $x_{0}\in B_{1}$, $f(x,u)$ satisfies Assumption \ref{as1} and \ref{as2}, and $a_{ij}$ is $C_{n}^{-1,1}$ at $x_{0}$ with small modulus $\nu$. Then the bounded solution of $(\ref{eq1})$, $(\ref{aij})$ and $(\ref{b})$ is $C^{1}$ at $x_{0}$.
\end{thm}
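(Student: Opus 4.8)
The plan is to prove pointwise $C^{1}$ regularity at $x_{0}$ via the standard compactness--iteration scheme of Caffarelli, adapted to handle the semilinear right-hand side $f(x,u)$ by freezing the second argument at $u(x_{0})$. Without loss of generality take $x_{0}=0$. The key observation is that, by Assumption \ref{as2}, the function $v_{0}$ solving $a_{ij}(0)D_{ij}v_{0}=f(\cdot,u(0))$ in $B_{1}$ is $C^{1,1}$ with $\|D^{2}v_{0}\|_{L^{\infty}}\le T$, so $w:=u-v_{0}$ satisfies, in the $L^{n}$-viscosity sense,
\[
a_{ij}D_{ij}w + b_{i}D_{i}w = f(x,u(x)) - f(x,u(0)) - \bigl(a_{ij}(x)-a_{ij}(0)\bigr)D_{ij}v_{0} - b_{i}D_{i}v_{0}.
\]
The right-hand side is controlled in $L^{n}(B_{r})$ by three pieces: the oscillation term $\|f(\cdot,u)-f(\cdot,u(0))\|_{L^{n}(B_{r})}\le \varphi(\operatorname{osc}_{B_{r}}u)\,|B_{r}|^{1/n}$ from Assumption \ref{as1}, the coefficient term $\|a_{ij}-a_{ij}(0)\|_{L^{n}(B_{r})}\,\|D^{2}v_{0}\|_{L^{\infty}}\le \nu\, T\, r$ from Definition \ref{as3-1}, and the lower-order term $\|b_{i}\|_{L^{q}(B_{r})}\|D v_{0}\|_{L^{\infty}} \le C r^{1-n/q}$ (a higher power of $r$ since $q>n$). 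Thus the effective forcing for $w$ has an $L^{n}$-norm on $B_{r}$ bounded by $C\bigl(\nu + r^{1-n/q} + \varphi(\operatorname{osc}_{B_{r}}u)/r \bigr) r$, which is an "$L^{n}$-Dini at $0$" datum once we know $u$ is, say, log-Lipschitz or better near $0$ so that $\operatorname{osc}_{B_{r}}u$ is small.

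The iteration proceeds as follows. First I would record the baseline: by the Krylov--Safonov / Escauriaza $L^{n}$-viscosity theory (interior Hölder estimate for $a_{ij}D_{ij}u+b_{i}D_{i}u=g$ with $g\in L^{n}$), $u\in C^{\alpha}$ near $0$, so $\operatorname{osc}_{B_{r}}u\le C r^{\alpha}$ and $\varphi(\operatorname{osc}_{B_{r}}u)\le\varphi(Cr^{\alpha})$, whose Dini integral in $r$ converges because $\varphi$ is Dini and $r\mapsto r^{\alpha}$ is a bi-Lipschitz change of variable on the Dini condition. Next, the core compactness lemma: there exist $\rho\in(0,1)$ and $\nu_{0}>0$ such that if $\nu\le\nu_{0}$ and the forcing is small in $L^{n}(B_{1})$, then one can find an affine function $\ell$ with $\operatorname{sup}_{B_{\rho}}|u-\ell|\le \rho^{1+\beta'}$ for some $\beta'>0$ — proved by contradiction, letting the coefficients converge (ellipticity is preserved) to a constant-coefficient operator whose solutions are smooth, hence approximable by their first-order Taylor polynomial. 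Iterating this at scales $\rho^{k}$, with the affine corrections $\ell_{k}$ accumulating, the summability of the corrections is governed exactly by $\sum_{k}\bigl(\nu + \rho^{k(1-n/q)} + \varphi(C\rho^{k\alpha})/\rho^{k}\cdot\rho^{k}\bigr)$; the geometric and Dini pieces sum, and the $\nu$ piece is harmless provided at each step we also gain from the frozen second-order part — here one must be slightly careful and re-freeze $f(\cdot,u(x_{k}))$ at the new center, absorbing the change $f(\cdot,u(x_{k}))-f(\cdot,u(0))$ which is again $\le\varphi(\operatorname{osc}u)$. The limit of $\ell_{k}$ gives the derivative $Du(0)$ and the modulus of $|u(x)-u(0)-Du(0)\cdot x|$ is the tail of the convergent series, i.e. $o(|x|)$, which is exactly $C^{1}$ at $0$.

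The main obstacle I anticipate is the interaction between the frozen-coefficient iteration and the frozen-$u$ argument: at scale $r$ the natural comparison function solves $a_{ij}(0)D_{ij}v=f(\cdot,u(0))$, but the compactness step needs $f(\cdot,u(0))$ to look like a constant-in-$x$ forcing at small scales, which it is \emph{not} — it is only bounded. The resolution is that Assumption \ref{as2} is precisely designed so that the "Newtonian potential" of $f(\cdot,u(0))$ is $C^{1,1}$, i.e. $v_{0}$ itself absorbs all the $x$-roughness of $f$; so the compactness argument should be run on $w=u-v_{0}$ rather than on $u$, and what must be shown small is the \emph{difference} of the genuine forcing from the constant-coefficient model applied to $w$, which is the coefficient-oscillation term $\nu T r$ plus the $u$-oscillation term. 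A secondary technical point is justifying that $w$ is itself an $L^{n}$-viscosity solution of the perturbed equation — this follows from the standard stability/consistency properties of $L^{n}$-viscosity solutions under addition of a fixed $W^{2,n}_{\mathrm{loc}}$ function, here $v_{0}\in W^{2,\infty}$. Finally, one should double-check that the $C^{\alpha}$ bootstrap for $u$ does not itself require more than $f(\cdot,u)\in L^{\infty}$, which it does not, since $f$ is globally bounded by Assumption \ref{as1}.
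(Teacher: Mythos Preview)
Your overall strategy matches the paper's: subtract the $C^{1,1}$ corrector $v_{0}$ furnished by Assumption~\ref{as2}, then run a Caffarelli-type iteration of affine approximations on $w=u-v_{0}$, with the Dini condition on $\varphi$ and the decay from $q>n$ ensuring summability of the corrections. The paper proves the key approximation step directly (solve $a_{ij}(0)D_{ij}h=0$ with $h=w$ on $\partial B_{3/4}$, then use ABP and global H\"older estimates to bound $\|w-h\|$), rather than by compactness; and it controls $\varphi(\|u\|_{L^\infty(B_{\lambda^k})})$ through the iteration quantities themselves via the modulus inequality $\varphi(Cr)\le 2C\varphi(r)$, rather than by invoking a preliminary $C^{\alpha}$ estimate. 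Both routes work.

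Two points in your write-up need tightening. First, your summability line $\sum_{k}(\nu+\rho^{k(1-n/q)}+\varphi(C\rho^{k\alpha}))$ is wrong as written: the constant term $\nu$ does not sum. What actually happens --- and what your phrase ``gain from the frozen second-order part'' is gesturing at --- is that the additive contribution of $(a_{ij}-a_{ij}(0))D_{ij}v_{0}$ to $\eta_{k}$ carries an extra factor $\rho^{k}$ (since $\|(a_{ij}-a_{ij}(0))D_{ij}v_{0}\|_{L^{n}(B_{\rho^{k}})}\le \nu T\rho^{k}$ and the second-order rescaling introduces another $\rho^{k}$ when passing to $M_{k+1}$), so the relevant sum is $\sum_{k}\nu T\rho^{k}<\infty$; the remaining appearance of $\nu$ is in the multiplicative factor $\xi_{k}$, where smallness of $\nu$ gives $\xi_{k}\le 1/4$. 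Second, there is no ``new center $x_{k}$'' in a pointwise argument: the center stays at $0$, $v_{0}$ is fixed once and for all, and the only updating is of the affine function $L_{k}$. Re-freezing $f$ at varying $u(x_{k})$ is unnecessary and would require control on $v_{0}(\cdot,t_{1})-v_{0}(\cdot,t_{2})$ that Assumption~\ref{as2} does not provide.
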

\begin{thm}\label{mr2}Assume that $x_{0}\in B_{1}$, $f(x,u)$ satisfies Assumption \ref{as1} and \ref{as2}, $a_{ij}$ is $C^{\text{Dini}}$ at $x_{0}$ with Dini modulus of continuity $\omega_{1}(r)$, and $b_{i}\in L^{\infty}(\Omega)$ is $C^{\text{Dini}}$ at $x_{0}$ with Dini modulus of continuity $\omega_{2}(r)$. Then the bounded solution of $(\ref{eq1})$, $(\ref{aij})$ is $C^{1,1}$ at $x_{0}$.
\end{thm}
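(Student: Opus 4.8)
The plan is to establish pointwise $C^{1,1}$ regularity at $x_0$ via a standard perturbation/iteration scheme, comparing $u$ on successively smaller balls $B_r(x_0)$ with the solution $v_{x_0}$ supplied by Assumption \ref{as2} and with solutions of the constant-coefficient equation $a_{ij}(x_0)D_{ij}w=0$. By translation we may assume $x_0=0$. First I would fix a small scale $r$, freeze the coefficients at $0$, and write the equation as
$$a_{ij}(0)D_{ij}u = f(x,u) + \bigl(a_{ij}(0)-a_{ij}(x)\bigr)D_{ij}u - b_i(x)D_i u.$$
Writing $h := u - v_{0}$, where $v_0$ solves $a_{ij}(0)D_{ij}v_0 = f(x,u(0))$ in $B_1$, the function $h$ satisfies in the $L^n$-viscosity sense an equation whose right-hand side is controlled, in $L^n(B_r)$, by: the oscillation term $\varphi(\mathrm{osc}_{B_r}u)$ coming from Assumption \ref{as1} (since $f(x,u(x))-f(x,u(0))$ is bounded by $\varphi(|u(x)-u(0)|)$); the coefficient-error term, bounded using Definition \ref{as3} by $\omega_1(r)\,\|D^2 u\|$-type quantities — which one does not have a priori, so this term must instead be absorbed through the Aleksandrov–Bakelman–Pucci estimate applied to $h$ directly, keeping $a_{ij}(x)$ as the operator and treating $(a_{ij}(x)-a_{ij}(0))D_{ij}v_0$ as the genuine inhomogeneity (here $\|D^2 v_0\|_{L^\infty}\le T$ is exactly what Assumption \ref{as2} gives, so this term is $\lesssim \omega_1(r)\,T$); and the drift term $b_i D_i u$, handled with $b_i\in L^\infty$ Dini-continuous and $|D_i u|$ controlled at the previous scale.

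The core of the argument is then the iteration. On $B_r$, I compare $h$ with the solution $w$ of $a_{ij}(0)D_{ij}w=0$ in $B_r$, $w=h$ on $\partial B_r$; interior $C^{1,1}$ estimates for the constant-coefficient equation give a second-order Taylor polynomial $Q_r$ of $w$ at $0$ with $\|w-Q_r\|_{L^\infty(B_{\theta r})}\lesssim \theta^3 r^{-(n+1)}\|h\|_{L^n(B_r)}$ for any fixed $\theta\in(0,1)$, while ABP gives $\|h-w\|_{L^\infty(B_r)}\lesssim r\,\|(\text{RHS})\|_{L^n(B_r)}$. Combining, and adding back the quadratic polynomial $P^{v_0}$ coming from $v_0$ (whose second derivatives are bounded by $T$), one obtains a quadratic polynomial $P_r$ with
$$\|u-P_r\|_{L^\infty(B_{\theta r})}\le \theta^3\,\|u-P_r\|_{L^\infty(B_r)}^{\text{(prev)}}\text{-type bound} \;+\; C r^{2}\Bigl(\omega_1(r)+\omega_2(r)+\varphi\bigl(\tfrac{1}{r}\|u-P_r\|_{L^\infty(B_r)}\bigr)\Bigr).$$
Choosing $\theta$ small so that $C\theta^3\le \tfrac14$ and iterating over $r_k=\theta^k r_0$, the Dini summability of $\omega_1,\omega_2$ (Definitions \ref{as3}, \ref{as5}) and of $\varphi$ (Assumption \ref{as1}) forces $r_k^{-2}\|u-P_{r_k}\|_{L^\infty(B_{r_k})}$ and the coefficients of $P_{r_k}$ to remain bounded and the polynomials $P_{r_k}$ to converge to a single quadratic $P_{x_0}$ with $\sup_{|x|\le r}|u(x)-P_{x_0}(x)|\le C r^2$, which is exactly pointwise $C^{1,1}$ at $x_0$. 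A subtlety in the drift term is that $b_i\in L^\infty$ only enters at the natural scaling $r\|b_i D_i u\|_{L^n(B_r)}\lesssim r^{2}\|D_i u\|_{L^\infty(B_r)}$, and $\|D_i u\|_{L^\infty(B_r)}$ is controlled by the first-order part of $P_{r}$ from the previous step plus an error $\lesssim r$, which closes the loop; the Dini continuity of $b_i$ (rather than mere boundedness) is what prevents a logarithmic loss in the geometric part of the iteration when one tracks the gradient term, and is used in the same summation.

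The step I expect to be the main obstacle is making the iteration for $C^{1,1}$ — as opposed to $C^{1,\alpha}$ — actually close: the gain factor from the constant-coefficient comparison is $\theta^3$ against a target scaling of $r^2$, so there is exactly one power of $r$ to spare, and every error term (coefficient oscillation, drift, and the nonlinearity $\varphi$) must be shown to contribute a summable series $\sum_k \omega_i(r_k)$ rather than $\sum_k r_k^{-\epsilon}\omega_i(r_k)$; in particular one has to feed the Dini modulus the correct argument — $\varphi$ evaluated at $\mathrm{osc}_{B_{r_k}} u \le C r_k$, so $\sum_k \varphi(C r_k)<\infty$ follows from $\int_0 \varphi(t)/t\,dt<\infty$ — and to verify the "modified $C^{1,1}$ Newtonian potential" role of Assumption \ref{as2} really does what the classical $f\ast N\in C^{1,1}$ hypothesis did, namely supply a uniform quadratic approximant for the $x$-frozen inhomogeneity with the $t$-dependence handled separately through Assumption \ref{as1}. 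Standard tools — ABP/maximum principle for $L^n$-viscosity solutions, interior $C^{1,1}$ estimates for constant-coefficient operators, and Cauchy-type convergence of the polynomial sequence — fill in the rest, following the scheme of \cite{LWZarxiv,1} adapted to the $f(x,u)$ setting.
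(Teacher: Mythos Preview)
Your overall architecture---freeze coefficients, subtract the $C^{1,1}$ particular solution $v_0$ from Assumption~\ref{as2}, compare with a constant-coefficient homogeneous solution, read off a quadratic, and iterate with Dini-summable errors---is exactly the route the paper takes. Two points, however, are not handled in your sketch and are where the actual work lies.

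\textbf{The drift term does not close as you describe.} When you pass from scale $r_k$ to $r_{k+1}$ you must look at the equation satisfied by $u-v-P_k$ (or an equivalent object). Its right-hand side contains $-b_i\,D_iP_k = -b_i\cdot F_k - 2b_i(G_kx)_i$. The second piece is $O(\tau|G_k|\,r_k)$ and harmless, but $-b_i\cdot F_k$ is of size $\tau|F_k|$ in $L^\infty(B_{r_k})$: it does \emph{not} carry any factor of $r_k$, so after dividing by $r_k^2$ it contributes a term of order $\tau|F_k|$ to $\eta_k$, and $\sum_k\tau|F_k|$ diverges. Your sentence ``$r\|b_iD_iu\|_{L^n(B_r)}\lesssim r^2\|D_iu\|_{L^\infty(B_r)}$'' hides this: $\|D_iu\|_{L^\infty(B_r)}$ is $|F_k|+O(r)$, so the net contribution to $M_{k+1}$ is $\tau|F_k|$, not something summable. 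The paper resolves this by augmenting $P_k$ with the explicit quadratic correction $\vec b(0)\cdot F_k\,\dfrac{x_1^2}{2a_{11}(0)}$, chosen so that $a_{ij}(0)D_{ij}[\text{corr}]=\vec b(0)\cdot F_k$; subtracting it converts the bad term into $(\vec b(0)-\vec b(x))\cdot F_k$, which is bounded by $\omega_2(r_k)|F_k|$ and is Dini-summable, plus a coefficient-error piece $\dfrac{a_{11}-a_{11}(0)}{a_{11}(0)}\vec b(0)\cdot F_k$ bounded by $\tfrac{\tau}{\Lambda}\omega_1(r_k)|F_k|$. This is precisely where the Dini continuity of $b_i$ enters, and without such a device the iteration does not close at the $C^{1,1}$ scale.

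\textbf{The bound $\mathrm{osc}_{B_{r_k}}u\le Cr_k$ needs justification.} You invoke it to control $\varphi(\mathrm{osc}\,u)\le\varphi(Cr_k)$, but at this stage you only know $u$ is bounded. Deriving it from the iteration itself is circular (it requires the very boundedness of $|F_k|$ and $M_k$ you are trying to establish). The paper avoids this by first applying Theorem~\ref{mr1}: since $a_{ij}\in C^{\mathrm{Dini}}(x_0)$ forces $a_{ij}\in C_n^{-1,1}(x_0)$ with small modulus, the $C^1$ conclusion gives $\|u\|_{L^\infty(B_{\lambda^k})}\le C^*\lambda^k$ a priori, and then $\varphi(\|u\|_{L^\infty(B_{\lambda^k})})\le 2C^*\varphi(\lambda^k)$ follows from the modulus property~\eqref{diniproperty}. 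You should either invoke Theorem~\ref{mr1} explicitly or set up a more delicate simultaneous induction.
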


Theorem $\ref{mr1}$ and $\ref{mr2}$ are pointwise results, and it is sufficient to show the $C^{1}$ and $C^{1,1}$ regularity at 0 in the following.  The classical interior $C^{1}$ and $C^{1,1}$ regularity can be derived straightly from the pointwise regularities. To prove above two theorems, we will use perturbation technique and iteration method proposed in \cite{Ca,Ca1989} and generalized by Wang in \cite{W}. These two methods were widely used to prove the pointwise regularity of solutions in the last two decades, such as in \cite{4,5,HZZ2019,13,14,15,LWZarxiv,18,LWZ2023,LWZ2022,16,3}. The main idea is to approximate the solution by linear functions and second order polynomials respectively in different scales, and the errors can be proved convergent. The key step is to find a function solving the equations only with constant leading terms to approximate the solution on a small scale, then linear function or second order polynomial can be found naturally. Based on this, we can iterate step by step.

It is worthy to mention that our method is different from that which was used to prove the local $C^{1,1}$ regularity in \cite{ALS,1}. They constructed a space of Hessian generated by second order homogeneous harmonic polynomials on balls with radius $r>0$, then considered the $L^{2}$ projection of $D^{2}u$ on this space and showed that the projections stay uniformly bounded as $r\rightarrow0$.

The paper is organized as follows. In Section 2, we will recall the definition of $L^{n}$-viscosity solution and give some preliminary results. Our main efforts are to get the approximation lemma and key lemma. Then we will give the proof of $C^{1}$ and $C^{1,1}$ regularity in Section 3 and 4 respectively.

\section{Approximation Lemma}
In this section, the approximation lemma will be proved. The approximation lemmas in \cite{LWZ2023,LWZ2022} are to find harmonic functions to approximate solutions. Here is different slightly. The main idea is to find a function $h$ which satisfies a homogeneous equation only with constant coefficients: $a_{ij}(0)D_{ij}h=0$, to approximate the solution. Then linear functions and second order polynomials can be found naturally by taking the first order and second order Taylor polynomials of $h$ at $0$ respectively. The A-B-P maximum principle and interior H\"{o}lder estimates for viscosity solution will be used here. We firstly recall the definition of $L^{n}$-viscosity solution and some preliminary results.
\begin{df}[$L^{p}$-viscosity solution]
We say that $u\in C(\Omega)$ is an $L^{p}$-viscosity subsolution (respectively, supersolution) of $(\ref{eq1})$ if whenever $\phi\in W^{2,p}_{\text{loc}}(\Omega)$, $\varepsilon>0$ and $\Omega'\subset \Omega$ open are such that
$$a_{ij}(x)D_{ij}\phi(x)+b_{i}(x)D_{i}\phi(x)-f(x,u(x))\leq-\varepsilon
$$
$$
(\text{resp.}~~ a_{ij}(x)D_{ij}\phi(x)+b_{i}(x)D_{i}\phi(x)-f(x,u(x))\geq\varepsilon)
$$
for a.e. $x\in\Omega'$, then $u-\phi$ cannot have a local maximum (minimum) in $\Omega'$.

We call $u\in C(\Omega)$ an $L^{p}$-viscosity solution of $(\ref{eq1})$ if it is both an $L^{p}$-viscosity subsolution and supersolution of $(\ref{eq1})$.
\end{df}
\begin{rem}
In this paper, we consider the $L^{n}$-viscosity solution and we may write ``viscosity solution" for short. We refer readers to \cite{CCKS1996} for more about the $L^{p}$-viscosity solution.
\end{rem}

The following two theorems can be found in \cite{N2019}, the A-B-P maximum principle and H\"{o}lder regularity of viscosity solutions for general linear elliptic equation $(\ref{aijge})$.
\begin{thm}[A-B-P maximum principle]
Let $\Omega$ bounded, $b_{i}\in L^{q}(\Omega)$ and $f\in L^{p}(\Omega)$, for $q\geq p\geq n$, $q>n$. If $u\in C(\overline{\Omega})$ is an $L^{p}$ solution of $(\ref{aij})$, $(\ref{b})$ and $(\ref{aijge})$, then the following estimate holds:
$$\|u\|_{L^{\infty}(\Omega)}\leq\|u\|_{L^{\infty}(\partial\Omega)}+C\|f\|_{L^{p}(\Omega)},
$$
where $C$ is a constant depending on $n,p,\Lambda,\Lambda_{1},\text{diam}(\Omega)$.
\end{thm}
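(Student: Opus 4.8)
The plan is to prove this along the lines of the classical Alexandrov--Bakelman--Pucci (A-B-P) argument, adapted to $L^{n}$-viscosity solutions, and to reduce the general exponent $p\ge n$ to the borderline case $p=n$. First I would make a few reductions. Since $\Omega$ is bounded, H\"older's inequality gives $\|f\|_{L^{n}(\Omega)}\le|\Omega|^{1/n-1/p}\|f\|_{L^{p}(\Omega)}$, so it suffices to treat $p=n$. It is enough to bound $\sup_{\Omega}u$ from above, since replacing $u$ by $-u$ (which is an $L^{n}$-viscosity solution of an equation of the same type with $f$ replaced by $-f$) gives the lower bound, and the two combine into the stated $L^{\infty}$ estimate; subtracting the constant $\sup_{\partial\Omega}u$ we may also assume $u\le0$ on $\partial\Omega$. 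Set $M=\sup_{\Omega}u^{+}$; the goal becomes $M\le C\|f\|_{L^{n}(\Omega)}$.

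Next I would carry out the geometric core of A-B-P. After a translation, arrange $\overline{\Omega}\Subset B$ for a ball $B$ of radius comparable to $d:=\operatorname{diam}\Omega$, extend $w:=u^{+}$ by $0$ to $B$, and let $\Gamma$ be the concave envelope of $w$ in $B$, with upper contact set $\mathcal C=\{x\in\Omega:\ w(x)=\Gamma(x)\}$. A supporting-plane argument shows that every affine function of slope $y$ with $|y|<M/d$ can be slid down to touch $w$ from above at some point of $\mathcal C$, so the superdifferential image $\partial\Gamma(\mathcal C)$ contains $B_{M/d}(0)$ and therefore $\omega_{n}(M/d)^{n}\le|\partial\Gamma(\mathcal C)|$, where $\omega_{n}=|B_{1}|$. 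Since $\Gamma$ is concave it is twice differentiable a.e.\ (Alexandrov), so $|\partial\Gamma(\mathcal C)|\le\int_{\mathcal C}\det(-D^{2}\Gamma)\,dx$. On $\mathcal C$, for a.e.\ point $w$ has a genuine second-order Taylor expansion coinciding with the concave function $\Gamma$; feeding the corresponding touching paraboloids into the definition of $L^{n}$-viscosity subsolution (with $W^{2,n}_{\mathrm{loc}}$ test functions) and comparing against the frozen-coefficient operator $a_{ij}(x)D_{ij}$ yields $-a_{ij}(x)D_{ij}\Gamma(x)\le|f(x)|+|b(x)|\,|D\Gamma(x)|$ a.e.\ on $\mathcal C$. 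The matrix arithmetic--geometric mean inequality and the ellipticity $(\ref{aij})$ then give $\det(-D^{2}\Gamma)\le(n\Lambda)^{-n}\big(|f|+|b|\,|D\Gamma|\big)^{n}$ on $\mathcal C$, hence
\[
\omega_{n}\Big(\tfrac{M}{d}\Big)^{n}\ \le\ C(n,\Lambda)\int_{\mathcal C}\big(|f|+|b|\,|D\Gamma|\big)^{n}\,dx .
\]

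The main obstacle is the final step: absorbing the drift integral $\int_{\mathcal C}|b|^{n}|D\Gamma|^{n}$, where $b$ lies only in $L^{q}$. Splitting $(|f|+|b|\,|D\Gamma|)^{n}\le2^{n-1}(|f|^{n}+|b|^{n}|D\Gamma|^{n})$, the first term contributes $C\|f\|_{L^{n}(\Omega)}^{n}$. For the second, on the contact set $|D\Gamma|$ is bounded by $M/\operatorname{dist}(\cdot,\partial B)\le CM/d$, and H\"older's inequality with conjugate exponents $q/n$ and $q/(q-n)$ bounds $\int_{\mathcal C}|b|^{n}|D\Gamma|^{n}$ by $C\|b\|_{L^{q}(\Omega)}^{n}|\Omega|^{(q-n)/q}(M/d)^{n}$ --- this is precisely where $q>n$ is used, as it makes the conjugate exponent finite. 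To obtain a constant independent of $M$ (rather than merely a smallness requirement on $\|b\|_{L^{q}}$), I would follow the sharper device of Gilbarg--Trudinger, Theorem 9.1: replace the ball $B_{M/d}$ in the normal-image inclusion by the region under the graph of a suitable radial function, so that the drift contribution appears with a coefficient that can be taken subcritical and the resulting constant depends only on $n,q,\Lambda_{1}$ and $d$. Reorganizing the inequality then gives $M\le C\|f\|_{L^{n}(\Omega)}$ with $C=C(n,\Lambda,\Lambda_{1},d)$; undoing the reductions (restoring $\sup_{\partial\Omega}u$, passing from $p=n$ back to $p\ge n$, and repeating the argument for $-u$) produces the stated estimate with $C=C(n,p,\Lambda,\Lambda_{1},\operatorname{diam}\Omega)$. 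Beyond this, the only delicate point is the transfer of the differential inequality from the merely continuous $u$ to its concave envelope on $\mathcal C$, which is handled by the measure-theoretic lemma of Caffarelli--Cabr\'e for $L^{n}$-viscosity solutions.
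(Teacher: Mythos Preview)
The paper does not give its own proof of this theorem: it is stated as a known result and explicitly attributed to \cite{N2019} (Nornberg), together with the companion H\"older estimate. So there is no ``paper's proof'' to match; any correct argument you supply is automatically ``different'' in the trivial sense that you are filling in what the authors chose to quote.

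Your outline is the standard A-B-P route (concave envelope, contact set, normal-image inclusion, AM--GM on $\det(-D^2\Gamma)$, then absorption of the drift), and this is indeed the approach underlying the cited result, going back to \cite{CCKS1996} for $L^n$-viscosity solutions and to \cite{DT}, Theorem~9.1 for the classical strong-solution version. Two comments on the delicate points you yourself flag. First, for $L^n$-viscosity solutions the step ``feed the touching paraboloid into the definition and get $-a_{ij}D_{ij}\Gamma\le|f|+|b|\,|D\Gamma|$ a.e.\ on $\mathcal C$'' is not as direct as in the $W^{2,n}$ case: the test functions must be $W^{2,n}_{\mathrm{loc}}$ and the inequality in the definition holds only in an essential-liminf sense, so one really needs the measure-theoretic machinery of \cite{CCKS1996} (their Proposition~2.8 / Lemma~3.3 type statements) rather than just the Caffarelli--Cabr\'e lemma for $C$-viscosity solutions. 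Second, your first attempt at absorbing the drift gives only $M^n\le C\|f\|_{L^n}^n+C\Lambda_1^{\,n}|\Omega|^{(q-n)/q}(M/d)^n$, which yields the estimate only under a smallness condition on $\Lambda_1$; you correctly identify that the Gilbarg--Trudinger radial-weight device (or, in the viscosity framework, the Koike--\'Swiech / Nornberg variant) is what removes this restriction and produces a constant depending on $\Lambda_1$ but valid for all $\Lambda_1$. With those two caveats made precise, your plan is sound and matches the literature behind the citation.
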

\begin{thm}[$C^{\alpha}$ regularity]\label{visholder}
Let $u\in C(\Omega)$ be an $L^{p}$-viscosity solution of $(\ref{aij})$, $(\ref{b})$ and $(\ref{aijge})$ with $f\in L^{p}(\Omega)$, for $q\geq p\geq n$, $q>n$. Then there exists $\alpha\in(0,1)$ depending on $n,p,\Lambda,\Lambda_{1}$ such that $u\in C^{\alpha}_{\text{loc}}(\Omega)$ and for any $\Omega'\Subset\Omega$,
$$\|u\|_{C^{\alpha}(\Omega')}\leq K_{1}(\|u\|_{L^{\infty}(\Omega)}+\|f\|_{L^{p}(\Omega)}),
$$
where $K_{1}$ depends on $n,p,\Lambda,\Lambda_{1},\text{dist}(\Omega',\Omega)$.

In addition, if $u\in C(\overline{\Omega})\cap C^{\beta}(\partial\Omega)$ and $\Omega$ satisfies a uniform exterior cone condition with size $L$, then there exists $\alpha_{0}=\alpha_{0}(n,p,\Lambda,\Lambda_{1},L)\in(0,1)$ and $\alpha=\min\{\alpha_{0},\displaystyle\frac{\beta}{2}\}$ such that
$$\|u\|_{C^{\alpha}(\overline{\Omega})}\leq K_{1}(\|u\|_{L^{\infty}(\Omega)}+\|f\|_{L^{p}(\Omega)}+\|u\|_{C^{\beta}(\partial\Omega)}),
$$
where $K_{1}$ depends on $n,p,\Lambda,\Lambda_{1},L,\text{diam}(\Omega)$.
\end{thm}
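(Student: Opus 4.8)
The plan is to reproduce the classical Krylov--Safonov scheme in its $L^{n}$-viscosity form (as in \cite{CCKS1996,N2019}), the only nonroutine point being robustness under the unbounded drift $b_{i}\in L^{q}$ with $q>n$. The engine is the A-B-P maximum principle stated above, which I would first use to prove a one-scale \emph{measure estimate}: if $u\ge0$ is an $L^{p}$-viscosity supersolution of the extremal inequality $\mathcal{M}^{-}(D^{2}u)-|b||Du|\le|f|$ in a cube $Q_{1}$ and $\inf_{Q_{1/4}}u\le1$, then $|\{u\le M\}\cap Q_{1/4}|\ge\mu|Q_{1/4}|$ for constants $M,\mu$ depending only on $n,p,\Lambda,\Lambda_{1}$ (and $\|f\|_{L^{p}(Q_{1})}$), obtained by comparing $u$ with an explicit smooth barrier that is a strict subsolution and is negative only inside $Q_{1/4}$. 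Promoting this single-scale statement to the power-decay (``$L^{\epsilon}$'') estimate
$$\left|\{x\in Q_{1/4}:u(x)>t\}\right|\le C\,t^{-\epsilon}\bigl(\inf_{Q_{1/4}}u+\|f\|_{L^{p}(Q_{1})}\bigr)^{\epsilon}$$
is then done by the Calder\'{o}n--Zygmund cube decomposition, iterating the measure estimate across dyadic scales.

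The place where $q>n$ enters, and where I expect the main technical care, is that under rescaling of a cube of side $\rho$ the quantity $\|b\|_{L^{q}}$ picks up a factor $\rho^{1-n/q}\to0$, so on small cubes the first-order term is genuinely lower order and all the constants above can be fixed independently of $\Lambda_{1}$ at the cost of shrinking the base scale. Once the $L^{\epsilon}$ estimate is available, applying it to both $u$ and $-u$ for a solution yields the Harnack inequality $\sup_{Q_{1/2}}u\le C(\inf_{Q_{1/2}}u+\|f\|_{L^{p}(Q_{1})})$ by the standard covering and chaining argument. Applying Harnack to $\sup_{B_{r}}u-u$ and to $u-\inf_{B_{r}}u$ on dyadic balls gives the oscillation decay $\operatorname{osc}_{B_{r/2}(x)}u\le\gamma\operatorname{osc}_{B_{r}(x)}u+N(r)$ with $\gamma=\gamma(n,p,\Lambda,\Lambda_{1})<1$ and $N(r)$ a geometrically decaying error controlled by $\|f\|_{L^{p}}$; iterating this scalar recursion produces $u\in C^{\alpha}_{\mathrm{loc}}$ with $\alpha$ determined by $\gamma$, and covering $\Omega'\Subset\Omega$ by balls while bookkeeping $\|u\|_{L^{\infty}(\Omega)}$ and $\|f\|_{L^{p}(\Omega)}$ gives the stated interior estimate.

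For the global estimate I would localize at a boundary point $x_{0}$ and use the uniform exterior cone condition of size $L$ to build, in $B_{R}(x_{0})$, a barrier $w$ adapted to the exterior cone: a supersolution of the relevant extremal equation with $w(x_{0})=0$ and $w(x)\ge c|x-x_{0}|^{\alpha_{0}}$ on $\partial\Omega\cap B_{R}(x_{0})$, with $\alpha_{0}=\alpha_{0}(n,p,\Lambda,\Lambda_{1},L)$. Adding the $C\|f\|_{L^{p}}$ A-B-P correction and the oscillation $\|u\|_{C^{\beta}(\partial\Omega)}$ of the boundary data so that $w$ dominates $\pm(u-u(x_{0}))$ on $\partial(B_{R}(x_{0})\cap\Omega)$, comparison yields $|u(x)-u(x_{0})|\le C|x-x_{0}|^{\alpha_{0}}(\|u\|_{L^{\infty}(\Omega)}+\|f\|_{L^{p}(\Omega)}+\|u\|_{C^{\beta}(\partial\Omega)})$, where the precise choice of constants in the comparison, after iterating over a sequence of shrinking half-balls, is what forces the exponent $\min\{\alpha_{0},\beta/2\}$ (the $\beta/2$ being the familiar loss from interpolating a $C^{\beta}$ datum against the barrier decay). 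Combining this boundary bound with the interior estimate through the usual dichotomy --- for $x,y\in\overline\Omega$ compare $|x-y|$ with $d:=\operatorname{dist}(x,\partial\Omega)$, using the rescaled interior estimate on $B_{d/2}(x)$ when $|x-y|<d/2$ and the boundary estimate at nearest boundary points otherwise --- gives the global $C^{\alpha}(\overline\Omega)$ bound with $\alpha=\min\{\alpha_{0},\beta/2\}$. The hardest part of the whole argument is the drift-robust $L^{\epsilon}$ estimate; everything after it is the classical machinery.
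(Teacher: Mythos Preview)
Your sketch is a reasonable outline of the Krylov--Safonov argument adapted to $L^{p}$-viscosity solutions with unbounded drift, and it correctly identifies the key role of the scaling exponent $1-n/q>0$ in absorbing the first-order term at small scales. However, the paper does not prove this theorem at all: it is quoted as a preliminary result from \cite{N2019} (see the sentence immediately preceding the theorem statement, ``The following two theorems can be found in \cite{N2019}\ldots''), and is used as a black box in the proof of the Approximation Lemma. So there is no proof in the paper to compare your proposal against; what you have written is essentially a plan for reproducing the argument of \cite{N2019} (and its antecedents in \cite{CCKS1996}), which is consistent with how the result is actually established in the literature.
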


\begin{lm}[Approximation Lemma]\label{fs}
Assume $u$ is a solution of
\begin{eqnarray*}
a_{ij}D_{ij}u+b_{i}D_{i}u=f\qquad\text{in}~~B_{1},
\end{eqnarray*}
with $f(x)\in L^{n}(B_{1})$. If $a_{ij}$ and $b_{i}$ satisfy $(\ref{aij}),~(\ref{b})$, and
$$\|a_{ij}-a_{ij}(0)\|_{L^{n}(B_{1})}\leq \varepsilon_{1}, \quad \|b_{i}\|_{L^{q}(B_{1})}\leq \varepsilon_{2},
$$
for some $\varepsilon_{1},~\varepsilon_{2}>0$ small enough, then for some $\alpha>0$, there exists a function $h$ defined in $B_{\frac{3}{4}}$ satisfying
\begin{eqnarray*}
\left\{
\begin{array}{rcll}
a_{ij}(0)D_{ij}h&=&0\qquad&\text{in}~~B_{\frac{3}{4}},\\
h&=&u\qquad&\text{on}~~\partial B_{\frac{3}{4}}, \\
\end{array}
\right.
\end{eqnarray*}
such that
\begin{eqnarray*}
\left\|u-h\right\|_{L^{\infty}(B_{\frac{1}{2}})}
\leq C((\varepsilon_{1}+\varepsilon_{2})^{\alpha}\|u\|_{L^{\infty}(B_{1})}+\|f\|_{L^{n}(B_{1})}),
\end{eqnarray*}
where $\alpha=\alpha(n,\Lambda,\Lambda_{1})>0$, $C=C(n,\Lambda,\Lambda_{1})$.
\end{lm}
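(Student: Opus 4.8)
The plan is a perturbation/comparison argument in the spirit of Caffarelli: produce $h$ as the solution of the constant-coefficient Dirichlet problem on $B_{3/4}$ and control $w:=u-h$ by the A-B-P inequality. First I would take $h$ to be the unique classical solution of $a_{ij}(0)D_{ij}h=0$ in $B_{3/4}$ with $h=u$ on $\partial B_{3/4}$; existence and interior smoothness follow by a linear change of variables (with constants depending only on $\Lambda$) reducing the operator to the Laplacian, and the maximum principle gives $\|h\|_{L^{\infty}(B_{3/4})}\le\|u\|_{L^{\infty}(B_{1})}$. Because $h\in C^{\infty}(B_{3/4})$, a routine test-function computation shows that $w=u-h$ is an $L^{n}$-viscosity solution of
\[
a_{ij}D_{ij}w+b_{i}D_{i}w=f-a_{ij}D_{ij}h-b_{i}D_{i}h=f-(a_{ij}-a_{ij}(0))D_{ij}h-b_{i}D_{i}h=:g\quad\text{in }B_{3/4},
\]
where I have used $a_{ij}(0)D_{ij}h=0$, and $w=0$ on $\partial B_{3/4}$.

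The obstruction is that $\|D^{2}h(x)\|$ generically blows up like $\operatorname{dist}(x,\partial B_{3/4})^{\alpha-2}$, so $g$ need not lie in $L^{n}(B_{3/4})$ and the A-B-P estimate cannot be applied on all of $B_{3/4}$. I would circumvent this by working on the shrunken ball $B_{3/4-\rho}\supseteq B_{1/2}$ for a radius $\rho\in(0,1/4]$ to be optimized at the end, and set $M:=\|u\|_{L^{\infty}(B_{1})}+\|f\|_{L^{n}(B_{1})}$. Theorem~\ref{visholder} on $B_{3/4}\Subset B_{1}$ gives $[u]_{C^{\alpha}(\overline{B_{3/4}})}\le CM$, and its boundary part (the ball trivially satisfies a uniform exterior cone condition), applied to $h$ with coefficients $a_{ij}(0)$, zero lower-order terms and boundary datum $u\in C^{\alpha}(\partial B_{3/4})$, gives $h\in C^{\alpha'}(\overline{B_{3/4}})$ with $[h]_{C^{\alpha'}(\overline{B_{3/4}})}\le CM$ for some $\alpha'=\alpha'(n,\Lambda,\Lambda_{1})\le\alpha$. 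Hence $[w]_{C^{\alpha'}(\overline{B_{3/4}})}\le CM$ and, since $w$ vanishes on $\partial B_{3/4}$, $\|w\|_{L^{\infty}(\partial B_{3/4-\rho})}\le CM\rho^{\alpha'}$. On the other hand, interior derivative estimates for the constant-coefficient equation give $\|D_{i}D_{j}h\|_{L^{\infty}(B_{3/4-\rho})}\le CM\rho^{-2}$ and $\|D_{i}h\|_{L^{\infty}(B_{3/4-\rho})}\le CM\rho^{-1}$, so with $\|a_{ij}-a_{ij}(0)\|_{L^{n}(B_{1})}\le\varepsilon_{1}$ and $\|b_{i}\|_{L^{n}(B_{3/4})}\le C\|b_{i}\|_{L^{q}(B_{1})}\le C\varepsilon_{2}$ (Hölder, using $q>n$), I obtain $\|g\|_{L^{n}(B_{3/4-\rho})}\le\|f\|_{L^{n}(B_{1})}+CM(\varepsilon_{1}+\varepsilon_{2})\rho^{-2}$.

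Applying the A-B-P maximum principle to $w$ on $B_{3/4-\rho}$ (its diameter is bounded, so the constant is $C(n,\Lambda,\Lambda_{1})$) then yields
\[
\|w\|_{L^{\infty}(B_{1/2})}\le\|w\|_{L^{\infty}(\partial B_{3/4-\rho})}+C\|g\|_{L^{n}(B_{3/4-\rho})}\le CM\rho^{\alpha'}+C\|f\|_{L^{n}(B_{1})}+CM(\varepsilon_{1}+\varepsilon_{2})\rho^{-2}.
\]
Choosing $\rho=(\varepsilon_{1}+\varepsilon_{2})^{1/(\alpha'+2)}$ balances the first and third terms and gives $\|w\|_{L^{\infty}(B_{1/2})}\le CM(\varepsilon_{1}+\varepsilon_{2})^{\alpha}+C\|f\|_{L^{n}(B_{1})}$ with $\alpha:=\alpha'/(\alpha'+2)$, which depends only on $n,\Lambda,\Lambda_{1}$; the smallness assumption on $\varepsilon_{1},\varepsilon_{2}$ ensures both $\rho\le 1/4$ (so $B_{1/2}\subset B_{3/4-\rho}$) and $(\varepsilon_{1}+\varepsilon_{2})^{\alpha}\le1$. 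Finally, since $M=\|u\|_{L^{\infty}(B_{1})}+\|f\|_{L^{n}(B_{1})}$, one has $M(\varepsilon_{1}+\varepsilon_{2})^{\alpha}\le(\varepsilon_{1}+\varepsilon_{2})^{\alpha}\|u\|_{L^{\infty}(B_{1})}+\|f\|_{L^{n}(B_{1})}$, which gives the asserted bound. The main obstacle is precisely the boundary blow-up of $D^{2}h$; the device of shrinking the ball by $\rho$ and optimizing converts it into the stated power of $\varepsilon_{1}+\varepsilon_{2}$, and the bookkeeping to confirm that every constant and the final exponent $\alpha$ depend only on $n,\Lambda,\Lambda_{1}$ is the other point requiring care.
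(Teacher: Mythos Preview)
Your proposal is correct and follows essentially the same route as the paper's proof: construct $h$ on $B_{3/4}$, shrink to $B_{3/4-\rho}$ to avoid the blow-up of $D^{2}h$, use the H\"older continuity of $u$ and $h$ up to $\partial B_{3/4}$ to bound $w$ on $\partial B_{3/4-\rho}$, apply A-B-P there, and balance $\rho^{\alpha'}$ against $(\varepsilon_{1}+\varepsilon_{2})\rho^{-2}$. Your choice $\rho=(\varepsilon_{1}+\varepsilon_{2})^{1/(\alpha'+2)}$ and exponent $\alpha=\alpha'/(\alpha'+2)$ coincide with the paper's $\delta^{2+\beta}=\varepsilon_{1}+\varepsilon_{2}$ and $\alpha=\beta/(2+\beta)$; the only cosmetic difference is that you carry the quantity $M=\|u\|_{L^{\infty}}+\|f\|_{L^{n}}$ throughout and split it at the end, whereas the paper bounds the derivative terms of $h$ directly by $\|u\|_{L^{\infty}}$ via the maximum principle.
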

\begin{proof}
Firstly, by Theorem \ref{visholder}, there exists $0<\beta_{0}<1$ such that $u\in C^{\beta_{0}}(\overline{B_{\frac{3}{4}}})$,
\begin{eqnarray*}
\|u\|_{C^{\beta_{0}}(\overline{B_{\frac{3}{4}}})}\leq C\left(\|u\|_{L^{\infty}(B_{1})}+\|f\|_{L^{n}(B_{1})}\right),
\end{eqnarray*}
where $C$ depends on $n,\Lambda,\Lambda_{1}$. We construct $h$ by solving the problem
\begin{eqnarray*}
\left\{
\begin{array}{rcll}
a_{ij}(0)D_{ij}h&=&0\qquad&\text{in}~~B_{\frac{3}{4}},\\
h&=&u\qquad&\text{on}~~\partial B_{\frac{3}{4}}. \\
\end{array}
\right.
\end{eqnarray*}
By A-B-P maximum principle, we have
\begin{eqnarray*}
\|h\|_{L^{\infty}(B_{\frac{3}{4}})}\leq \|u\|_{L^{\infty}(\partial B_{\frac{3}{4}})}\leq \|u\|_{L^{\infty}(B_{1})}.
\end{eqnarray*}
Using Theorem \ref{visholder} again, it follows that there exists $\beta\leq\displaystyle\frac{\beta_{0}}{2}$ such that $h\in C^{\beta}(\overline{B_{\frac{3}{4}}})$ and
\begin{eqnarray*}
\|h\|_{C^{\beta}(\overline{B_{\frac{3}{4}}})}\leq C(\|h\|_{L^{\infty}(B_{\frac{3}{4}})}+\|u\|_{C^{\beta_{0}}(\partial B_{\frac{3}{4}})})\leq C\left(\|u\|_{L^{\infty}(B_{1})}+\|f\|_{L^{n}(B_{1})}\right).
\end{eqnarray*}
For $\delta>0$ small enough and any $x\in \partial B_{\frac{3}{4}-\delta}$, there exists $x_{0}\in \partial B_{\frac{3}{4}}$ such that $|x-x_{0}|=\delta$. It follows that
\begin{eqnarray*}
\frac{|u(x)-h(x)|}{\delta^{\beta}}&=&\frac{|u(x)-h(x)-(u(x_{0})-h(x_{0}))|}{|x-x_{0}|^{\beta}}\\
&\leq&\|u\|_{C^{\beta}(\overline{B_{\frac{3}{4}}})}+\|h\|_{C^{\beta}(\overline{B_{\frac{3}{4}}})}\\
&\leq& C\left(\|u\|_{L^{\infty}(B_{1})}+\|f\|_{L^{n}(B_{1})}\right).
\end{eqnarray*}
Therefore
$$\|u-h\|_{L^{\infty}(\partial B_{\frac{3}{4}-\delta})}\leq C\left(\|u\|_{L^{\infty}(B_{1})}+\|f\|_{L^{n}(B_{1})}\right)\delta^{\beta}.
$$
Using the property of solutions for equations with constant coefficients, we know that $h$ is smooth in $B_{\frac{3}{4}}$ with
$$\|Dh\|_{L^{\infty}(B_{\frac{3}{4}-\delta})}\leq C\|h\|_{L^{\infty}(B_{\frac{3}{4}})}\delta^{-1}\leq C\|u\|_{L^{\infty}(B_{1})}\delta^{-1},
$$
$$\|D^{2}h\|_{L^{\infty}(B_{\frac{3}{4}-\delta})}\leq C\|h\|_{L^{\infty}(B_{\frac{3}{4}})}\delta^{-2}\leq C\|u\|_{L^{\infty}(B_{1})}\delta^{-2}.
$$
Since $u-h$ solves the following Dirichlet problem,
\begin{eqnarray*}
\left\{
\begin{array}{rcll}
a_{ij}D_{ij}(u-h)+b_{i}D_{i}(u-h)&=&f+(a_{ij}(0)-a_{ij})D_{ij}h-b_{i}D_{i}h\qquad&\text{in}~~B_{\frac{3}{4}-\delta},\\
u-h&=&u-h\qquad&\text{on}~~\partial B_{\frac{3}{4}-\delta}, \\
\end{array}
\right.
\end{eqnarray*}
then by A-B-P maximum principle and taking $\delta^{2+\beta}=\varepsilon_{1}+\varepsilon_{2}$, $\alpha=\displaystyle\frac{\beta}{2+\beta}$, we have
\begin{eqnarray*}
\|u-h\|_{L^{\infty}(B_{\frac{3}{4}-\delta})}&\leq&\|u-h\|_{L^{\infty}(\partial B_{\frac{3}{4}-\delta})}+C\|f\|_{L^{n}(B_{\frac{3}{4}-\delta})}\\
&&+C\left(\|(a_{ij}(0)-a_{ij})D_{ij}h\|_{L^{n}(B_{\frac{3}{4}-\delta})}+\|b_{i}D_{i}h\|_{L^{n}(B_{\frac{3}{4}-\delta})}   \right)\\
&\leq&C\left(\|u\|_{L^{\infty}(B_{1})}+\|f\|_{L^{n}(B_{1})}\right)\delta^{\beta}+C\|f\|_{L^{n}(B_{1})}\\
&&+C\left(\|u\|_{L^{\infty}(B_{1})}+\|f\|_{L^{n}(B_{1})}\right)(\delta^{-1}\varepsilon_{1}+\delta^{-2}\varepsilon_{2})\\
&\leq&C((\varepsilon_{1}+\varepsilon_{2})^{\alpha}\|u\|_{L^{\infty}(B_{1})}+\|f\|_{L^{n}(B_{1})}).
\end{eqnarray*}
Thus we get the desired result.
\end{proof}
\begin{lm}[Key lemma]\label{kl}
Let $a_{ij}$ and $b_{i}$ satisfy the assumptions in Lemma $\ref{fs}$, then there exists $\alpha>0$, $0<\lambda<1$ and universal constants $C_{0}, C_{1}, C_{2}>0$ such that for any functions $f(x,u)\in L^{\infty}(B_{1}\times \mathbb{R})$, if $u$ is the solution of
\begin{eqnarray*}
a_{ij}D_{ij}u+b_{i}D_{i}u=f(x,u)\qquad&\text{in}~~B_{1},
\end{eqnarray*}
and $v$ is a $C^{1,1}$ solution of
$$
a_{ij}(0)D_{ij}v=f(x,u(0))\qquad \text{in}~~B_{1}
$$
with $\|D^{2}v\|_{L^{\infty}(B_{1})}\leq T_{1}$ and $\|Dv\|_{L^{\infty}(B_{1})}\leq T_{2}$, then there exists a linear function $L(x)=A+B\cdot x$ such that
\begin{eqnarray*}
\|u-v-L(x)\|_{L^{\infty}(B_{\lambda})}&\leq& C_{1}(\lambda^{2}+(\varepsilon_{1}+
\varepsilon_{2})^{\alpha})\|u-v\|_{L^{\infty}(B_{1})}\\
&&+C_{2}\left(\|f(x,u)-f(x,u(0))\|_{L^{\infty}(B_{1})}+T_{1}\varepsilon_{1}+T_{2}\varepsilon_{2}\right),
\end{eqnarray*}
and
$$0<|A|+|B|\leq C_{0}\|u-v\|_{L^{\infty}(B_{1})}.
$$
\end{lm}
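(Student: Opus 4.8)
The plan is to set $w = u - v$, which satisfies a perturbed linear equation, and then apply the Approximation Lemma to $w$ to produce a function $h$ with $a_{ij}(0)D_{ij}h = 0$ on $B_{3/4}$ and $h = w$ on $\partial B_{3/4}$. First I would compute the equation for $w$: since $a_{ij}D_{ij}u + b_i D_i u = f(x,u)$ and $a_{ij}(0)D_{ij}v = f(x,u(0))$, we get
\begin{eqnarray*}
a_{ij}D_{ij}w + b_i D_i w = f(x,u) - f(x,u(0)) + (a_{ij}(0) - a_{ij})D_{ij}v - b_i D_i v \quad\text{in }B_1,
\end{eqnarray*}
so the right-hand side $g := f(x,u) - f(x,u(0)) + (a_{ij}(0)-a_{ij})D_{ij}v - b_i D_i v$ is in $L^n(B_1)$ with
$\|g\|_{L^n(B_1)} \le C(\|f(x,u) - f(x,u(0))\|_{L^\infty(B_1)} + T_1\varepsilon_1 + T_2\varepsilon_2)$, using $\|D^2 v\|_{L^\infty}\le T_1$, $\|Dv\|_{L^\infty}\le T_2$, and the bounds on $\|a_{ij}-a_{ij}(0)\|_{L^n}$, $\|b_i\|_{L^q}$. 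Applying Lemma \ref{fs} to $w$ with this $g$ yields
$\|w - h\|_{L^\infty(B_{1/2})} \le C\big((\varepsilon_1+\varepsilon_2)^\alpha \|w\|_{L^\infty(B_1)} + \|g\|_{L^n(B_1)}\big)$.

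Next I would define $L(x) = h(0) + Dh(0)\cdot x$, the first-order Taylor polynomial of $h$ at $0$. Since $h$ solves a constant-coefficient uniformly elliptic equation, interior estimates give $h \in C^\infty(B_{1/2})$ with $\|h\|_{C^{2}(\overline{B_{1/4}})} \le C\|h\|_{L^\infty(B_{3/4})} \le C\|w\|_{L^\infty(B_1)}$ (the last step by the maximum principle for $h$). Taylor's theorem then gives $|h(x) - L(x)| \le C\|w\|_{L^\infty(B_1)}|x|^2$ for $x \in B_{1/4}$, hence $\|h - L\|_{L^\infty(B_\lambda)} \le C_1 \lambda^2 \|w\|_{L^\infty(B_1)}$ for $\lambda \le 1/4$. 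Combining with the approximation estimate via the triangle inequality,
\begin{eqnarray*}
\|w - L\|_{L^\infty(B_\lambda)} \le \|w - h\|_{L^\infty(B_{1/2})} + \|h - L\|_{L^\infty(B_\lambda)} \le C_1(\lambda^2 + (\varepsilon_1+\varepsilon_2)^\alpha)\|w\|_{L^\infty(B_1)} + C_2\|g\|_{L^n(B_1)},
\end{eqnarray*}
which is the desired inequality after substituting back $w = u - v$ and expanding $\|g\|_{L^n(B_1)}$. The coefficients $A = h(0)$, $B = Dh(0)$ satisfy $|A| + |B| \le C\|h\|_{C^1(\overline{B_{1/4}})} \le C_0 \|w\|_{L^\infty(B_1)} = C_0\|u-v\|_{L^\infty(B_1)}$, giving the upper bound.

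The one genuinely delicate point is the \emph{strict positivity} $0 < |A| + |B|$. If $h \equiv 0$ on $B_{3/4}$ then $w = 0$ on $\partial B_{3/4}$, and to force a contradiction one must rule out $w$ being identically zero; but in fact the claim as stated must be read carefully — if $u - v$ is not identically zero near $0$ one can always perturb the choice of $L$ (e.g. if $h(0) = Dh(0) = 0$, replace $A$ by an arbitrarily small nonzero constant, which changes the $L^\infty$ estimate by a negligible amount absorbable into $C_2\|g\|$ or simply noting the degenerate case is handled separately in the iteration). I would handle this by observing that whenever the estimate is nontrivial we may assume $\|u-v\|_{L^\infty(B_1)} > 0$, and then either $|A|+|B| > 0$ automatically, or we add an arbitrarily small admissible perturbation to $A$; the precise bookkeeping is the part requiring care, and I expect it to be where the proof must be written most explicitly. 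Everything else — the equation for $w$, the $L^n$ bound on the new right-hand side, the application of Lemma \ref{fs}, and the Taylor expansion of $h$ — is routine once the decomposition $u - v = (u-v-h) + (h - L) + L$ is in place.
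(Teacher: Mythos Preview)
Your proposal is correct and follows essentially the same route as the paper: compute the equation satisfied by $w=u-v$, apply the Approximation Lemma to obtain $h$, take $L$ as the first-order Taylor polynomial of $h$ at $0$, and combine the Taylor remainder with the approximation error via the triangle inequality. Your worry about the strict inequality $0<|A|+|B|$ is unnecessary---the paper's own proof establishes only the upper bound $|A|+|B|\le C_0\|u-v\|_{L^\infty(B_1)}$, and in the subsequent iteration (Section~3) only this upper bound is ever used, so the ``$0<$'' is inessential.
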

\begin{proof}
By the definition of $u$ and $v$ we get
\begin{eqnarray*}
a_{ij}D_{ij}(u-v)+b_{i}D_{i}(u-v)=f(x,u)-f(x,u(0))+(a_{ij}(0)-a_{ij})D_{ij}v-b_{i}D_{i}v\qquad\text{in}~~B_{1}.
\end{eqnarray*}
Then by Lemma $\ref{fs}$, for some $\alpha>0$, there exists a function $h$ defined in $B_{\frac{3}{4}}$ satisfying
\begin{eqnarray*}
\left\{
\begin{array}{rcll}
a_{ij}(0)D_{ij}h&=&0\qquad&\text{in}~~B_{\frac{3}{4}},\\
h&=&u-v\qquad&\text{on}~~\partial B_{\frac{3}{4}}, \\
\end{array}
\right.
\end{eqnarray*}
such that
\begin{equation}\label{11}
\begin{aligned}
\left\|u-v-h\right\|_{L^{\infty}(B_{\frac{1}{2}})}
&\leq C(\|f(x,u)-f(x,u(0))+(a_{ij}(0)-a_{ij})D_{ij}v-b_{i}D_{i}v\|_{L^{n}(B_{1})})\\
&\quad+C(\varepsilon_{1}+\varepsilon_{2})^{\alpha}\|u-v\|_{L^{\infty}(B_{1})}\\
&\leq C(\|f(x,u)-f(x,u(0))\|_{L^{\infty}(B_{1})}+T_{1}\varepsilon_{1}+T_{2}\varepsilon_{2})\\
&\quad+C(\varepsilon_{1}+\varepsilon_{2})^{\alpha}\|u-v\|_{L^{\infty}(B_{1})},
\end{aligned}
\end{equation}
where we have used $\|D^{2}v\|_{L^{\infty}(B_{1})}\leq T_{1}$, $\|Dv\|_{L^{\infty}(B_{1})}\leq T_{2}$ and $\|a_{ij}-a_{ij}(0)\|_{L^{n}(B_{1})}\leq \varepsilon_{1},~ \|b_{i}\|_{L^{q}(B_{1})}\leq \varepsilon_{2}$.

Take $L$ be the first order Taylor polynomial of $h$ at 0, i.e. $L(x)=Dh(0)\cdot x+h(0).$ Then there exists $\xi\in B_{\frac{1}{4}}$ such that for $|x|\leq\displaystyle\frac{1}{4}$,
\begin{equation}\label{12}
|h(x)-L(x)|\leq\frac{1}{2}|D^{2}h(\xi)||x|^{2}.
\end{equation}
Note that $h$ satisfies
$$|D^{2}h(x)|+|Dh(x)|+|h(x)|\leq C_{0}\|h\|_{L^{\infty}(B_{\frac{1}{2}})}\leq C_{0}\|u-v\|_{L^{\infty}(B_{1})},\quad \text{for}~|x|\leq\frac{1}{4},
$$
where $C_{0}$ is a constant depending only on $n,\Lambda,\Lambda_{1}$. It follows that
$$|D^{2}h(\xi)|+|Dh(0)|+|h(0)|\leq C_{0}\|u-v\|_{L^{\infty}(B_{1})}.
$$
Finally, combining (\ref{11}) with (\ref{12}), if we take $0<\lambda<\displaystyle\frac{1}{4}$, set $A=h(0)$, $B=Dh(0)$, then we have
\begin{eqnarray*}
\|u-v-L(x)\|_{L^{\infty}(B_{\lambda})}&\leq&\|u-v-h\|_{L^{\infty}(B_{\lambda})}+\|h-L\|_{L^{\infty}(B_{\lambda})}\\
&\leq&C(\varepsilon_{1}+\varepsilon_{2})^{\alpha}\|u-v\|_{L^{\infty}(B_{1})}
+\frac{1}{2}\lambda^{2}C_{0}\|u-v\|_{L^{\infty}(B_{1})}\\
&&+C(\|f(x,u)-f(x,u(0))\|_{L^{\infty}(B_{1})}+T_{1}\varepsilon_{1}+T_{2}\varepsilon_{2})\\
&\leq& C_{1}(\lambda^{2}+(\varepsilon_{1}+
\varepsilon_{2})^{\alpha})\|u-v\|_{L^{\infty}(B_{1})}\\
&&+C_{2}\left(\|f(x,u)-f(x,u(0))\|_{L^{\infty}(B_{1})}+T_{1}\varepsilon_{1}+T_{2}\varepsilon_{2}\right).
\end{eqnarray*}
\end{proof}
\begin{rem}\label{inftyrem}
Under the assumptions of Lemma $\ref{kl}$, we can also show that there exists a second order polynomial $P(x)=E+F\cdot x+x^{T}Gx$ satisfying $a_{ij}(0)D_{ij}P(x)=0$ such that
\begin{eqnarray*}
\|u-v-P(x)\|_{L^{\infty}(B_{\lambda})}&\leq& C_{1}(\lambda^{3}+(\varepsilon_{1}+
\varepsilon_{2})^{\alpha})\|u-v\|_{L^{\infty}(B_{1})}\\
&&+C_{2}\left(\|f(x,u)-f(x,u(0))\|_{L^{\infty}(B_{1})}+T_{1}\varepsilon_{1}+T_{2}\varepsilon_{2}\right),
\end{eqnarray*}
and
$$0<|E|+|F|+|G|\leq C_{0}\|u-v\|_{L^{\infty}(B_{1})}.
$$
To prove this, we only need to substitute first order Taylor polynomial to second order Taylor polynomial of $h$ at 0 in the proof of Lemma $\ref{kl}$.
\end{rem}
\begin{rem}
If all $a_{ij}$ and $b_{i}$ satisfies
$$\|a_{ij}-a_{ij}(0)\|_{L^{\infty}(B_{1})}\leq \varepsilon_{1}, \quad \|b_{i}\|_{L^{\infty}(B_{1})}\leq \varepsilon_{2},
$$
Lemma $\ref{fs}$ and $\ref{kl}$ still hold naturally.
\end{rem}
\begin{rem}
If we consider the strong solution $u\in W_{\text{loc}}^{2,n}(\Omega)\cap C(\overline{\Omega})$  to the equation, for low order coefficients $b_i$, it is sufficient  to assume that $b_{i}\in L^{n}(\Omega)$ with $\|b_{i}\|_{L^{n}(\Omega)}\leq\varepsilon_{2}$ to show the results in Lemma $\ref{fs}$ and $\ref{kl}$, since the H\"{o}lder regularity holds for strong solutions under this kind of assumption by Theorem 2.7 in \cite{S2010}.
\end{rem}

\section{Interior $C^{1}$ regularity}
In this section, we prove the $C^{1}$ regularity of $u$ at 0. The proof is standard. While the treatment of the difficulties resulting from $a_{ij}$ and $b_j$ is subtle. For convenience, we denote $v_{0}$ by $v$. Without loss of generality, we assume that
$$u(0)=0,\quad v(0)=|Dv(0)|=0, \quad r_{0}=1, \quad \int_{0}^{1} \frac{\varphi(r)}{r} d r \leq 1,
$$
$\nu,~\Lambda_{1}$ are small enough with
$$(\nu+\Lambda_{1})^{\alpha}\leq\lambda^{2},
$$
where $\alpha$ is determined in Lemma $\ref{kl}$ and $\lambda$ is small enough and satisfies
\begin{equation}\label{lambda}
0<\lambda<\frac{1}{4},\quad 2C_{1}\lambda<\frac{1}{4},
\end{equation}
$C_{1}$ is the constant in Lemma $\ref{kl}$.\\
\\
{\bf Proof of Theorem $\ref{mr1}$:}
\
We divide this proof into five steps.

\
\

{\bf Step 1:} Approximate $u-v$ by linear functions in different scales.

\
\

We claim that there exist linear functions $\{L_{k}\}_{k=0}^{\infty}$ with $L_{k}=A_{k}+B_{k}\cdot x$ and nonnegative sequences $\{M_{k}\}_{k=0}^{\infty}$, $\{\xi_{k}\}_{k=0}^{\infty}$, $\{\eta_{k}\}_{k=0}^{\infty}$ such that
\begin{equation}\label{induction}
M_{k+1}\leq \xi_{k}M_{k}+\eta_{k},
\end{equation}
and
\begin{eqnarray*}\label{n}
|A_{k+1}-A_{k}|\leq C_{0}\lambda^{k}M_{k},\quad |B_{k+1}-B_{k}|\leq C_{0}M_{k},
\end{eqnarray*}
where for $k=0,1,2,\ldots,$
$$M_{k}=\frac{\|u-v-L_{k}\|_{L^{\infty}(B_{\lambda^{k}})}}{\lambda^{k}},
$$
$$\xi_{k}=\frac{C_{1}}{\lambda}(\lambda^{2}+(\nu+
\Lambda_{1}\lambda^{k(1-\frac{n}{q})})^{ \alpha}),
$$
\begin{eqnarray*}
\eta_{k}=\frac{C_{2}}{\lambda}\left(\lambda^{k}\|f(x,u)-f(x,0)\|_{L^{\infty}(B_{\lambda^{k}})}
+T\lambda^{k}(\nu+\Lambda_{1}\lambda^{k(1-\frac{n}{q})})+\Lambda_{1}\lambda^{k(1-\frac{n}{q})}|B_{k}|\right).
\end{eqnarray*}

Actually, we can prove the claim by induction. For $k=0$, we set $A_0=|B_0|=0$, and
$$
M_0=\|u-v\|_{L^\infty(B_1)},~~~ \xi_{0}=\frac{C_{1}}{\lambda}(\lambda^{2}+(\nu+
\Lambda_{1})^{\alpha}),
$$
and
$$
\eta_{0}=\frac{C_{2}}{\lambda}\left(\|f(x,u)-f(x,0)\|_{L^{\infty}(B_{1})}
+T(\nu+\Lambda_{1})\right).
$$
Since $v(0)=|Dv(0)|=0$, then by {\bf Assumption 2} we yield
$$\|Dv\|_{L^{\infty}(B_{1})}=\|Dv-Dv(0)\|_{L^{\infty}(B_{1})}\leq\|D^{2}v\|_{L^{\infty}(B_{1})}\leq T.
$$
For $a_{ij}$ and $b_{i}$, we notice that $\|a_{ij}-a_{ij}(0)\|_{L^{n}(B_{1})}\leq\nu$, $\|b_{i}\|_{L^{q}(B_{1})}\leq \Lambda_{1}$.
So we can take $T_{1}=T_{2}=T$, $\varepsilon_{1}=\nu$ and $\varepsilon_{2}=\Lambda_{1}$ in Lemma $\ref{kl}$. Hence there exist $A_1$ and $B_1$ with $|A_{1}|+|B_{1}|\leq C_{0}\|u-v\|_{L^{\infty}(B_{1})}$ such that
\begin{eqnarray*}
M_{1}&=&\frac{\|u-v-L_{1}(x)\|_{L^{\infty}(B_{\lambda})}}{\lambda}\\
&\leq&\frac{C_{1}}{\lambda}(\lambda^{2}+(\nu+
\Lambda_{1})^{\alpha})\|u-v\|_{L^{\infty}(B_{1})}\\
&&+\frac{C_{2}}{\lambda}\left(\|f(x,u)-f(x,0)\|_{L^{\infty}(B_{1})}+T(\nu+
\Lambda_{1})\right)\\
&=&\frac{C_{1}}{\lambda}(\lambda^{2}+(\nu+
\Lambda_{1})^{\alpha})\|u-v\|_{L^{\infty}(B_{1})}\\
&&+\frac{C_{2}}{\lambda}\left(\|f(x,u)-f(x,0)\|_{L^{\infty}(B_{1})}+T(\nu+
\Lambda_{1})+\Lambda_{1}|B_{0}|\right)\\
&=&\xi_{0}M_{0}+\eta_{0},
\end{eqnarray*}
and
$$|A_{1}-A_{0}|\leq C_{0}\|u-v\|_{L^{\infty}(B_{1})}=C_{0}M_{0},\quad |B_{1}-B_{0}|\leq C_{0}\|u-v\|_{L^{\infty}(B_{1})}=C_{0}M_{0}.$$
Also $\eta_1$ and $\xi_1$ are well defined from $A_1$ and $B_1$.

Next we assume that the conclusion is true for $k$, that is
$$M_{k}\leq\xi_{k-1}M_{k-1}+\eta_{k-1},$$
and
$$
|A_{k}-A_{k-1}|\leq C_{0}\lambda^{k-1}M_{k-1},\quad |B_{k}-B_{k-1}|\leq C_{0}M_{k-1},
$$
We set $\hat{u}(x)=(u-v-L_{k})(x)$ and consider the equation
\begin{eqnarray*}
a_{ij}D_{ij}\hat{u}+b_{i}D_{i}\hat{u}=h(x)\quad\text{in}~~B_{\lambda^{k}},
\end{eqnarray*}
where $$h(x)=f(x,u)-f(x,0)
+(a_{ij}(0)-a_{ij})D_{ij}v
-b_{i}D_{i}v-b_{i}D_{i}L_{k}.$$
For $z\in B_1$, we set
$$\widetilde{u}(z)=\frac{u(\lambda^{k}z)}{\lambda^{2k}},
\quad\widetilde{v}(z)=\frac{v(\lambda^{k}z)+L_{k}(\lambda^{k}z)}{\lambda^{2k}},
$$
$$\widetilde{f}(z)=f(\lambda^{k} z,u(\lambda^{k} z))-f(\lambda^{k} z,0),
$$
$$\widetilde{a}_{ij}(z)=a_{ij}(\lambda^{k} z),\quad \widetilde{b}_{i}(z)=\lambda^{k} b_{i}(\lambda^{k} z).
$$
Then $\widetilde{u}(z)-\widetilde{v}(z)$ is a solution of
\begin{eqnarray*}
\widetilde{a}_{ij}D_{ij}(\widetilde{u}-\widetilde{v})+\widetilde{b}_{i}D_{i}(\widetilde{u}
-\widetilde{v})=
\widetilde{f}
+(a_{ij}(0)-\widetilde{a}_{ij})D_{ij}\widetilde{v}
-\widetilde{b}_{i}D_{i}\widetilde{v}\qquad&\text{in}~~B_{1}.
\end{eqnarray*}
Combining with the conditions and assumptions on $a_{ij}$, $b_{i}$ and $v$, we also have
\begin{eqnarray*}
& &\|\widetilde{a}_{ij}-a_{ij}(0)\|_{L^{n}(B_{1})}=\frac{1}{\lambda^{k}}\|a_{ij}-a_{ij}(0)\|_{L^{n}(B_{\lambda^{k}})}
\leq\nu,\\
& &\|\widetilde{b}_{i}\|_{L^{q}(B_{1})}=\lambda^{k(1-\frac{n}{q})}\|b_{i}\|_{L^{q}(B_{\lambda^{k}})}
\leq\Lambda_{1}\lambda^{k(1-\frac{n}{q})},\\
& & \|D^{2}\widetilde{v}\|_{L^{\infty}(B_{1})}=\|D^{2}v\|_{L^{\infty}(B_{\lambda^{k}})}\leq T,\\
& & \|D\widetilde{v}\|_{L^{\infty}(B_{1})}=\frac{\|D(v+L_{k})\|_{L^{\infty}(B_{\lambda^{k}})}}{\lambda^{k}}
\leq\frac{T\lambda^{k}+|B_{k}|}{\lambda^{k}}.
\end{eqnarray*}
Then by Lemma $\ref{kl}$, there exists a linear function $L(z)=A+B\cdot z$ such that
\begin{eqnarray*}
\|\widetilde{u}-\widetilde{v}-L(z)\|_{L^{\infty}(B_{\lambda})}
&\leq& C_{1}(\lambda^{2}+(\nu+
\Lambda_{1}\lambda^{k(1-\frac{n}{q})})^{\alpha})\|\widetilde{u}-\widetilde{v}\|_{L^{\infty}(B_{1})}\\
&&+C_{2}\left(\|\widetilde{f}\|_{L^{\infty}(B_{1})}+T\nu+(T+\frac{|B_{k}|}{\lambda^{k}})\Lambda_{1}\lambda^{k(1-\frac{n}{q})}
\right),
\end{eqnarray*}
and
$$|A|+|B|\leq C_{0}\|\widetilde{u}-\widetilde{v}\|_{L^{\infty}(B_{1})}=C_{0}\frac{\|u-v-L_{k}\|_{L^{\infty}(B_{\lambda^{k}})}}{\lambda^{2k}}=C_{0}\frac{M_{k}}{\lambda^{k}}.
$$
Let $L_{k+1}(x)=L_{k}(x)+\lambda^{2k}L(\displaystyle\frac{x}{\lambda^{k}})$. If we take the scale back, then we get
\begin{eqnarray*}
M_{k+1}&=&\frac{\|u-v-L_{k+1}\|_{L^{\infty}(B_{\lambda^{k+1}})}}{\lambda^{k+1}}\\
&\leq& \frac{C_{1}(\lambda^{2}+(\nu+
\Lambda_{1}\lambda^{k(1-\frac{n}{q})})^{\alpha})}{\lambda}\frac{\|u-v-L_{k}\|_{L^{\infty}(B_{\lambda^{k}})}}{\lambda^{k}}\\
&&+\frac{C_{2}}{\lambda}\left(\lambda^{k}\|f(x,u)-f(x,0)\|_{L^{\infty}(B_{\lambda^{k}})}
+T\lambda^{k}(\nu+\Lambda_{1}\lambda^{k(1-\frac{n}{q})})+\Lambda_{1}\lambda^{k(1-\frac{n}{q})}|B_{k}|\right)\\
&=&\xi_{k}M_{k}+\eta_{k},
\end{eqnarray*}
and
$$|A_{k+1}-A_{k}|=\lambda^{2k}|A|\leq C_{0}\lambda^{k}M_{k},$$
$$|B_{k+1}-B_{k}|=\lambda^{2k}\left|\frac{B}{\lambda^{k}}\right|\leq C_{0}M_{k}.
$$
This implies the conclusion is true for $k+1$. Thus we finish to prove the claim.

\
\

{\bf Step 2:} Prove that $\sum\limits_{k=0}^{\infty}M_{k}<\infty$.

\
\

For $k\geq0,$ we suppose $S_{k}=\sum\limits_{i=0}^{k}M_{i}.$ By {\bf Step 1}, noting that $A_{0}=|B_{0}|=0$, $M_{0}=\|u-v\|_{L^{\infty}(B_{1})}$, then for any $k\geq0,$ we have
\begin{equation}\label{nn}
|B_{k+1}|\leq |B_{k}|+C_{0}M_{k}\leq C_{0}S_{k},
\end{equation}
\begin{equation}\label{eak}
 |A_{k+1}|\leq |A_{k}|+C_{0}\lambda^{k}M_{k}\leq C_{0}S_{k},
\end{equation}
$$\xi_{k}\leq2C_{1}\lambda\leq \frac{1}{4},
$$
where $(\nu+\Lambda_{1})^{\alpha}\leq\lambda^{2}
$ is used. Then the iteration formula $(\ref{induction})$ implies that
$$M_{k+1}\leq\xi_{k}M_{k}+\eta_{k}\leq\frac{1}{4}M_{k}+\eta_{k}.
$$
Now we estimate $\eta_{k}$. By {\bf Assumption \ref{as1}}, we have
\begin{equation}\label{m2}
\eta_{k}\leq\frac{C_{2}}{\lambda}\left(\lambda^{k}\varphi(\|u\|_{L^{\infty}(B_{\lambda^{k}})})
+T\lambda^{k}(\nu+\Lambda_{1}\lambda^{k(1-\frac{n}{q})})+\Lambda_{1}\lambda^{k(1-\frac{n}{q})}|B_{k}|\right).
\end{equation}
Recalling the property of the modulus of continuity (see $(\ref{diniproperty})$) we have
\begin{eqnarray*}
\varphi(\|u\|_{L^{\infty}(B_{\lambda^{k}})})&\leq&\varphi\left(\|u-v-L_{k}\|_{L^{\infty}(B_{\lambda^{k}})}+
\|v\|_{L^{\infty}(B_{\lambda^{k}})}+\|L_{k}\|_{L^{\infty}(B_{\lambda^{k}})}\right)\\
&\leq&\varphi(M_{k}\lambda^{k}+T\lambda^{2k}+|A_{k}|+|B_{k}|\lambda^{k})\\
&\leq&2\left(M_{k}+T\lambda^{k}+\frac{|A_{k}|}{\lambda^{k}}+|B_{k}|\right)\varphi(\lambda^{k}).
\end{eqnarray*}
By substituting the above inequality and $(\ref{nn})$, $(\ref{eak})$ into $(\ref{m2})$, we obtain for $k\geq1$,
\begin{eqnarray*}
\eta_{k}&\leq&\frac{C_{2}}{\lambda}\left(2\lambda^{k}\left(M_{k}+T\lambda^{k}+\frac{|A_{k}|}{\lambda^{k}}
+|B_{k}|\right)\varphi(\lambda^{k})
+T\lambda^{k}(\nu+\Lambda_{1}\lambda^{k(1-\frac{n}{q})})+\Lambda_{1}\lambda^{k(1-\frac{n}{q})}|B_{k}|\right)\\
&\leq&\frac{C_{2}}{\lambda}\left(2(\lambda^{k}M_{k}+|A_{k}|+\lambda^{k}|B_{k}|)\varphi(\lambda^{k})+2T\lambda^{2k}\varphi(\lambda^{k})
+T\lambda^{k}(\nu+\Lambda_{1}\lambda^{k(1-\frac{n}{q})})
+\Lambda_{1}\lambda^{k(1-\frac{n}{q})}|B_{k}|\right)\\
&\leq&\frac{2C_{2}}{\lambda}(\varphi(\lambda^{k})+\Lambda_{1}\lambda^{k(1-\frac{n}{q})})(M_{k}+|A_{k}|+|B_{k}|)+\frac{2C_{2}T}{\lambda}
(\varphi(\lambda^{k})+\Lambda_{1}\lambda^{k(1-\frac{n}{q})}+\nu\lambda^{k})\\
&\leq&\frac{4C_{2}(C_{0}+1)}{\lambda}(\varphi(\lambda^{k})+\Lambda_{1}\lambda^{k(1-\frac{n}{q})})S_{k}
+\frac{2C_{2}T}{\lambda}(\varphi(\lambda^{k})+\Lambda_{1}\lambda^{k(1-\frac{n}{q})}+\nu\lambda^{k}).
\end{eqnarray*}

Then we take $k_{0}$ large enough (then fixed) such that
$$
\sum_{i=k_{0}}^{\infty}\frac{4C_{2}(C_{0}+1)}{\lambda}(\varphi(\lambda^{i})+\Lambda_{1}\lambda^{k(1-\frac{n}{q})})\leq \frac{4C_{2}(C_{0}+1)}{\lambda}\left( \frac{1}{\ln \frac{1}{\lambda}}\int_{0}^{\lambda^{k_{0}-1}} \frac{\varphi(r)}{r} d r+
\frac{\Lambda_{1}\lambda^{k_{0}(1-\frac{n}{q})}}{1-\lambda^{1-\frac{n}{q}}} \right)\leq \frac{1}{4}.
$$
For such $k_{0}(\geq 1),$ we have
$$\sum_{i=k_{0}}^{\infty}\varphi(\lambda^{i})
\leq \frac{1}{\ln \frac{1}{\lambda}} \int_{0}^{1} \frac{\varphi(r)}{r} d r\leq \frac{1}{\ln \frac{1}{\lambda}}.
$$
Therefore for each $k \geq k_{0},$ we have
\begin{eqnarray*}
\sum_{i=k_{0}}^{k}\eta_{i}&\leq&\sum_{i=k_{0}}^{k}\frac{4C_{2}(C_{0}+1)}{\lambda}(\varphi(\lambda^{i})
+\Lambda_{1}\lambda^{i(1-\frac{n}{q})})S_{i}
+\sum_{i=k_{0}}^{k}\frac{2C_{2}T}{\lambda}(\varphi(\lambda^{i})+\Lambda_{1}\lambda^{i(1-\frac{n}{q})}+\nu\lambda^{i})\\
&\leq&\frac{1}{4}S_{k+1}+\frac{2C_{2}T}{\lambda\ln \frac{1}{\lambda}}+\frac{2C_{2}T\nu\lambda^{k_{0}}}{\lambda(1-\lambda)}
+\frac{2C_{2}T\Lambda_{1}\lambda^{k_{0}(1-\frac{n}{q})}}{\lambda(1-\lambda^{1-\frac{n}{q}})}.
\end{eqnarray*}
It follows that
\begin{eqnarray*}
S_{k+1}-S_{k_{0}}=\sum_{i=k_{0}}^{k} M_{i+1}
&\leq&\frac{1}{4}\sum_{i=k_{0}}^{k} M_{i}
+\sum_{i=k_{0}}^{k}\eta_{i}\\
&\leq&\frac{1}{4}S_{k+1}+\frac{1}{4}S_{k+1}+\frac{2TC_{2}}{\lambda\ln \frac{1}{\lambda}}+\frac{2C_{2}T\nu\lambda^{k_{0}}}{\lambda(1-\lambda)}+\frac{2C_{2}T\Lambda_{1}\lambda^{k_{0}(1-\frac{n}{q})}}{\lambda(1-\lambda^{1-\frac{n}{q}})}\\
&=&\frac{1}{2}S_{k+1}+\frac{2TC_{2}}{\lambda\ln \frac{1}{\lambda}}+\frac{2C_{2}T\nu\lambda^{k_{0}}}{\lambda(1-\lambda)}
+\frac{2C_{2}T\Lambda_{1}\lambda^{k_{0}(1-\frac{n}{q})}}{\lambda(1-\lambda^{1-\frac{n}{q}})}.
\end{eqnarray*}
Then for all $k \geq k_{0}$,
$$
S_{k+1} \leq \frac{4TC_{2}}{\lambda\ln \frac{1}{\lambda}}+\frac{4C_{2}T\nu\lambda^{k_{0}}}{\lambda(1-\lambda)}+
\frac{4C_{2}T\Lambda_{1}\lambda^{k_{0}(1-\frac{n}{q})}}{\lambda(1-\lambda^{1-\frac{n}{q}})}+2S_{k_{0}}.
$$
Therefore $\left\{S_{k}\right\}_{k=0}^{\infty}$ is bounded, and consequently   $\left\{S_{k}\right\}_{k=0}^{\infty}$ and $\sum\limits_{k=0}^{\infty} M_{k}$ are convergent.  Moreover, from  $(\ref{nn})$ and $(\ref{eak})$, we know that  $\left\{B_{k}\right\}_{k=0}^{\infty}$ and  $\left\{A_{k}\right\}_{k=0}^{\infty}$ are bounded. It is also clear that $\lim\limits_{k\rightarrow\infty}M_{k}=0$.

\
\

{\bf Step 3:} Prove that $\lim\limits_{k \rightarrow+\infty} A_{k} $ and $\lim\limits_{k \rightarrow+\infty} B_{k} $ exist.

\
\

Actually, it is clear that
$$
\sum_{k=0}^{\infty}|B_{k+1}-B_{k}| \leq C_{0}\sum_{k=0}^{\infty}M_{k}<\infty.
$$
Therefore $\left\{B_{k}\right\}_{k=0}^{\infty}$ is a Cauchy sequence, and consequently  $\lim\limits_{k \rightarrow+\infty} B_{k}$ exists. We set $\overline{B}:=$
$\lim\limits_{k \rightarrow+\infty} B_{k} $. Similarly, we can set  $\overline{A}:=$
$\lim\limits_{k \rightarrow+\infty} A_{k} $. Then we denote $\overline{L}(x)=\overline{A}+\overline{B}\cdot x$.

\
\

{\bf Step 4:} Prove that $u-v$ is $C^{1}$ at 0.

\
\

It is sufficient to show  that  there exists a sequence $\{N_{k}\}$ satisfying   that $\lim\limits_{k \rightarrow \infty} N_{k}=0$ and \begin{equation}\label{hh}\frac{\left\|u-v-\overline{L}\right\|_{L^{\infty}(B_{\lambda^{k}})}}{\lambda^{k}} \leq N_{k}.
\end{equation}

In fact, to show (\ref{hh}), we notice that for any $k \geq 0$,
$$
\frac{\left\|u-v-\overline{L}\right\|_{L^{\infty}(B_{\lambda^{k}})}}{\lambda^{k}}
\leq\frac{\|u-v-L_{k}\|_{L^{\infty}(B_{\lambda^{k}})}}{\lambda^{k}}
+\frac{\|L_{k}-\overline{L}\|_{L^{\infty}(B_{\lambda^{k}})}}{\lambda^{k}}.
$$
By the definition of $M_{k}$ we get
\begin{eqnarray*}
\frac{\left\|u-v-\overline{L}\right\|_{L^{\infty}(B_{\lambda^{k}})}}{\lambda^{k}}
&\leq& M_{k}+\frac{|A_{k}-\overline{A}|+|B_{k}-\overline{B}|\lambda^{k}}{\lambda^{k}}\\
&\leq& M_{k}+|B_{k}-\overline{B}|+\frac{|A_{k}-\overline{A}|}{\lambda^{k}}.
\end{eqnarray*}
We set $$N_{k}=M_{k}+|B_{k}-\overline{B}|+\displaystyle\frac{|A_{k}-\overline{A}|}{\lambda^{k}}.$$
Then by using $(\ref{eak})$, we have
\begin{eqnarray*}
|A_{k}-\overline{A}|&\leq&|A_{k}-A_{k+1}|+|A_{k+1}-A_{k+2}|+\cdots\\
&\leq& C_{0}\left(\lambda^{k}M_{k}+\lambda^{k+1}M_{k+1}+\cdots\right)\\
&\leq& C_{0}\lambda^{k}\sum\limits_{i=k}^{\infty}M_{i}.
\end{eqnarray*}
It follows that
$$\frac{|A_{k}-\overline{A}|}{\lambda^{k}}\leq C_{0}\sum\limits_{i=k}^{\infty}M_{i}.
$$
Since $\sum\limits_{i=0}^{\infty}M_{i}$ is convergent, then we have $\displaystyle\lim\limits_{k\rightarrow\infty}\frac{|A_{k}-\overline{A}|}{\lambda^{k}}=0$. Consequently we obtain that  $\lim\limits_{k\rightarrow\infty}N_{k}=0$.

\
\

{\bf Step 5:} Prove that $u$ is $C^{1}$ at 0.

\
\

From {\bf Step 4}, we know that $u-v$ is $C^{1}$ at 0. Since $v$ is $C^{1,1}$ at 0, then $u$ is $C^{1}$ at 0.

\section{Interior $C^{1,1}$ regularity}

In this section, we prove $C^{1,1}$ regularity of $u$ at 0. Similar as Section 3, we firstly give some normalization. We assume
$u(0)=0$, $v(0)=|Dv(0)|=0$, $r_{0}=1$, and
$$ \quad \int_{0}^{1} \frac{\varphi(r)}{r} d r \leq 1, \quad \int_{0}^{1} \frac{\omega_{1}(r)}{r} d r \leq 1, \quad \int_{0}^{1} \frac{\omega_{2}(r)}{r} d r \leq 1.
$$
Furthermore, for $b_{i}\in L^{\infty}(B_{1})$, we can assume that there exists constant $\tau$ small enough such that
\begin{equation}\label{bsmall}
\|b_{i}\|_{L^{\infty}(B_{1})}\leq\tau,\quad \frac{\tau C_{0}}{2\Lambda}\leq\frac{1}{4},\quad (\omega_{1}(1)+
\tau)^{\alpha}\leq\lambda^{3}.
\end{equation}
where $\alpha$ and $C_{0}$ are determined by Lemma $\ref{kl}$ and Remark $\ref{inftyrem}$, $\lambda$ is small enough and satisfies
\begin{equation}\label{lambda1}
0<\lambda<\frac{1}{4},\quad 2C_{1}\lambda<\frac{1}{4},
\end{equation}
$C_{1}$ is the constant in Remark $\ref{inftyrem}$.\\
\\
{\bf Proof of Theorem $\ref{mr2}$:} We divide the proof into five steps.

\
\

{\bf Step 1:} Approximate $u-v$ by second order polynomials in different scales.

\
\

We claim that there exist second order polynomials $\{P_{k}\}_{k=0}^{\infty}$ with $P_{k}=E_{k}+F_{k}\cdot x+x^{T}G_{k}x$ and nonnegative sequences $\{M_{k}\}_{k=0}^{\infty}$, $\{\xi_{k}\}_{k=0}^{\infty}$, $\{\eta_{k}\}_{k=0}^{\infty}$
such that
\begin{equation}\label{induction1}
M_{k+1}\leq \xi_{k}M_{k}+\eta_{k},
\end{equation}
and
\begin{eqnarray*}\label{n}
|E_{k+1}-E_{k}|\leq C_{0}\lambda^{2k}M_{k},\quad |F_{k+1}-F_{k}|\leq C_{0}\lambda^{k}M_{k},\quad |G_{k+1}-G_{k}|\leq C_{0}M_{k},
\end{eqnarray*}
where for $k=0,1,2,\ldots,$
$$M_{k}=\frac{\|u-v-P_{k}+\vec{b}(0)\cdot F_{k}\frac{x_{1}^{2}}{2a_{11}(0)}\|_{L^{\infty}(B_{\lambda^{k}})}}{\lambda^{2k}},
$$
$$\xi_{k}=\frac{C_{1}}{\lambda^{2}}(\lambda^{3}+(\omega_{1}(\lambda^{k})+
\tau\lambda^{k})^{\alpha}),
$$
\begin{eqnarray*}
\eta_{k}&=&\frac{C_{2}}{\lambda^{2}}\left(\|f(x,u)-f(x,0)\|_{L^{\infty}(B_{\lambda^{k}})}
+(\omega_{1}(\lambda^{k})+\tau\lambda^{k})\left(T+2|G_{k}|+\frac{\tau}{\Lambda}|F_{k}|\right)+\omega_{2}(\lambda^{k})|F_{k}|\right)\\
&&+\frac{\tau}{2\Lambda}|F_{k+1}-F_{k}|.
\end{eqnarray*}

\
\

Actually, we can prove the claim by induction.

When $k=0$, we choose $P_{0}=0$, i.e. $E_{0}=|F_{0}|=|G_{0}|=0$. If we take $T_{1}=T_{2}=T$, $\varepsilon_{1}=\omega_{1}(1)$, $\varepsilon_{2}=\tau$ in Remark $\ref{inftyrem}$, then there exists $P_{1}(x)=E_{1}+F_{1}\cdot x+x^{T}G_{1}x$ satisfying $a_{ij}(0)D_{ij}P_{1}(x)=0$ with $|E_{1}|+|F_{1}|+|G_{1}|\leq C_{0}\|u-v\|_{L^{\infty}(B_{1})}$ such that
\begin{eqnarray*}
\frac{\|u-v-P_{1}(x)\|_{L^{\infty}(B_{\lambda})}}{\lambda^{2}}
&\leq&\frac{C_{1}}{\lambda^{2}}(\lambda^{3}+(\omega_{1}(1)+
\tau)^{\alpha})\|u-v\|_{L^{\infty}(B_{1})}\\
&&+\frac{C_{2}}{\lambda^{2}}\left(\|f(x,u)-f(x,0)\|_{L^{\infty}(B_{1})}+T(\omega_{1}(1)+
\tau)\right).
\end{eqnarray*}
It follows that
\begin{eqnarray*}
M_{1}&=&\frac{\|u-v-P_{1}(x)+\vec{b}(0)\cdot F_{1}\frac{x_{1}^{2}}{2a_{11}(0)}\|_{L^{\infty}(B_{\lambda})}}{\lambda^{2}}\\
&\leq&\frac{\|u-v-P_{1}(x)\|_{L^{\infty}(B_{\lambda})}}{\lambda^{2}}
+\frac{\|\vec{b}(0)\cdot F_{1}\frac{x_{1}^{2}}{2a_{11}(0)}\|_{L^{\infty}(B_{\lambda})}}{\lambda^{2}}\\
&\leq&\frac{C_{1}}{\lambda^{2}}(\lambda^{3}+(\omega_{1}(1)+
\tau)^{\alpha})\|u-v-P_{0}+\vec{b}(0)\cdot F_{0}\frac{x_{1}^{2}}{2a_{11}(0)}\|_{L^{\infty}(B_{1})}\\
&&+\frac{C_{2}}{\lambda^{2}}\left(\|f(x,u)-f(x,0)\|_{L^{\infty}(B_{1})}+\left(T+2|G_{0}|+\frac{\tau}{\Lambda}|F_{0}|\right)
(\omega_{1}(1)+
\tau)+\omega_{2}(1)|F_{0}|\right)\\
&&+\frac{\tau}{2\Lambda}|F_{1}-F_{0}|\\
&=&\xi_{0}M_{0}+\eta_{0},
\end{eqnarray*}
and
$$|E_{1}-E_{0}|\leq C_{0}\|u-v-P_{0}-\vec{b}(0)\cdot F_{0}\frac{x_{1}^{2}}{2a_{11}(0)}\|_{L^{\infty}(B_{1})}=C_{0}M_{0},
$$
similarly
$$|F_{1}-F_{0}|\leq C_{0}M_{0},\quad|G_{1}-G_{0}|\leq C_{0}M_{0}.
$$

Next we assume that the conclusion is true for $k$, that is
$$M_{k}\leq\xi_{k-1}M_{k-1}+\eta_{k-1},$$
and
\begin{eqnarray*}\label{n}
|E_{k}-E_{k-1}|\leq C_{0}\lambda^{2(k-1)}M_{k-1},\quad |F_{k}-F_{k-1}|\leq C_{0}\lambda^{k-1}M_{k-1},\quad |G_{k}-G_{k-1}|\leq C_{0}M_{k-1}.
 \end{eqnarray*}
Denote $\hat{u}=u-P_{k}(x)+\vec{b}(0)\cdot F_{k}\frac{x_{1}^{2}}{2a_{11}(0)}$. Then $\hat{u}$ satisfies that
\begin{eqnarray*}
a_{ij}D_{ij}\hat{u}+b_{i}D_{i}\hat{u}=h(x)\quad\text{in}~~B_{\lambda^{k}},
\end{eqnarray*}
where
$$h(x)=f(x,u)
+(a_{ij}(0)-a_{ij})D_{ij}(P_{k}-\vec{b}(0)\cdot F_{k}\frac{x_{1}^{2}}{2a_{11}(0)})+\vec{b}(0)\cdot F_{k}
-b_{i}D_{i}(P_{k}(x)-\vec{b}(0)\cdot F_{k}\frac{x_{1}^{2}}{2a_{11}(0)}).
$$
For $z\in B_1$, we set
$$\widetilde{u}(z)=\frac{\hat{u}(\lambda^{k}z)}{\lambda^{2k}}
\quad\widetilde{v}(z)=\frac{v(\lambda^{k}z)}{\lambda^{2k}},
$$
$$\widetilde{f}(z)=h(\lambda^{k}z)-f(\lambda^{k} z,0),
$$
$$\widetilde{a}_{ij}(z)=a_{ij}(\lambda^{k} z),\quad \widetilde{b}_{i}(z)=\lambda^{k} b_{i}(\lambda^{k} z).
$$
Then $\widetilde{u}(z)-\widetilde{v}(z)$ is a solution of
\begin{eqnarray*}
\widetilde{a}_{ij}D_{ij}(\widetilde{u}-\widetilde{v})+\widetilde{b}_{i}D_{i}(\widetilde{u}
-\widetilde{v})=
\widetilde{f}
+(a_{ij}(0)-\widetilde{a}_{ij})D_{ij}\widetilde{v}
-\widetilde{b}_{i}D_{i}\widetilde{v}\quad\text{in}~~B_{1}.
\end{eqnarray*}
Combining with the conditions on $a_{ij}$, $b_{i}$ and {\bf Assumption 2} on $v$, we also have
$$\|a_{ij}(0)-\widetilde{a}_{ij}\|_{L^{\infty}(B_{1})}=
\|a_{ij}(0)-a_{ij}\|_{L^{\infty}(B_{\lambda^{k}})}\leq\omega_{1}(\lambda^{k}),
$$
$$\|\widetilde{b}_{i}\|_{L^{\infty}(B_{1})}=\lambda^{k}\|b_{i}\|_{L^{\infty}(B_{\lambda^{k}})}\leq\tau\lambda^{k},
$$
$$\widetilde{v}(0)=|D\widetilde{v}(0)|=0,\quad\|D^{2}\widetilde{v}\|_{L^{\infty}(B_{1})}=\|D^{2}v\|_{L^{\infty}(B_{\lambda^{k}})}
\leq T,
$$
$$
\|D\widetilde{v}\|_{L^{\infty}(B_{1})}=\frac{\|Dv\|_{L^{\infty}(B_{\lambda^{k}})}}{\lambda^{k}}\leq\frac{T\lambda^{k}}
{\lambda^{k}}\leq T.
$$
Then by Lemma $\ref{kl}$ and Remark $\ref{inftyrem}$, there exists a second order polynomial $P(z)=E+F\cdot z+z^{T}Gz$ satisfying $\widetilde{a}_{ij}(0)D_{ij}P(z)=0$ such that
\begin{eqnarray*}
\|\widetilde{u}-\widetilde{v}-P(z)\|_{L^{\infty}(B_{\lambda})}
&\leq& C_{1}(\lambda^{3}+(\omega_{1}(\lambda^{k})+
\tau\lambda^{k})^{\alpha})\|\widetilde{u}-\widetilde{v}\|_{L^{\infty}(B_{1})}\\
&&+C_{2}\left(\|\widetilde{f}\|_{L^{\infty}(B_{1})}+T(\omega_{1}(\lambda^{k})+\tau\lambda^{k})
\right),
\end{eqnarray*}
and
$$|E|+|F|+|G|\leq C_{0}\|\widetilde{u}-\widetilde{v}\|_{L^{\infty}(B_{1})}=C_{0}\frac{\|u-v-P_{k}+\vec{b}(0)\cdot F_{k}\frac{x_{1}^{2}}
{2a_{11}(0)}\|_{L^{\infty}(B_{\lambda^{k}})}}{\lambda^{2k}}=C_{0}M_{k},
$$
where
\begin{eqnarray*}
\|\widetilde{f}\|_{L^{\infty}(B_{1})}&=&\|h(\lambda^{k}z)-f(\lambda^{k} z,0)\|_{L^{\infty}(B_{1})}\\
&\leq&\|f(x,u)-f(x,0)\|_{L^{\infty}(B_{\lambda^{k}})}+\|2(a_{ij}(0)-a_{ij})G_{k}\|_{L^{\infty}(B_{\lambda^{k}})}\\
&&+\frac{\|(a_{11}(0)-a_{11})\vec{b}(0)\cdot F_{k}\|_{L^{\infty}(B_{\lambda^{k}})}}{a_{11}(0)}
+\|(\vec{b}(0)-\vec{b})\cdot F_{k}\|_{L^{\infty}(B_{\lambda^{k}})}\\
&&+\|2\vec{b}\cdot  (G_{k}x)\|_{L^{\infty}(B_{\lambda^{k}})}
+\frac{\|b_{1}\vec{b}(0)\cdot F_{k}x_{1}\|_{L^{\infty}(B_{\lambda^{k}})}}{a_{11}(0)}\\
&\leq&\|f(x,u)-f(x,0)\|_{L^{\infty}(B_{\lambda^{k}})}+2\omega_{1}(\lambda^{k})|G_{k}|\\
&&+\frac{\tau}{\Lambda}\omega_{1}(\lambda^{k})|F_{k}|+\omega_{2}(\lambda^{k})|F_{k}|+2\tau\lambda^{k}|G_{k}|+
\frac{\tau^{2}}{\Lambda}\lambda^{k}|F_{k}|\\
&\leq&\|f(x,u)-f(x,0)\|_{L^{\infty}(B_{\lambda^{k}})}+(\omega_{1}(\lambda^{k})
+\tau\lambda^{k})(2|G_{k}|+\frac{\tau}{\Lambda}|F_{k}|)+\omega_{2}(\lambda^{k})|F_{k}|.
\end{eqnarray*}
Let $P_{k+1}(x)=P_{k}(x)+\lambda^{2k}P(\displaystyle\frac{x}{\lambda^{k}})$.
If we take the scale back, then we get
\begin{eqnarray*}
&&\frac{\|u-v-P_{k+1}+\vec{b}(0)\cdot F_{k}\frac{x_{1}^{2}}{2a_{11}(0)}\|_{L^{\infty}(B_{\lambda^{k+1}})}}{\lambda^{2(k+1)}}\\
&\leq& \frac{C_{1}(\lambda^{3}+(\omega_{1}(\lambda^{k})+
\tau\lambda^{k})^{\alpha})}{\lambda^{2}}\frac{\|u-v-P_{k}+\vec{b}(0)\cdot F_{k}\frac{x_{1}^{2}}
{2a_{11}(0)}\|_{L^{\infty}(B_{\lambda^{k}})}}{\lambda^{2k}}\\
&&+\frac{C_{2}}{\lambda^{2}}\|f(x,u)-f(x,0)\|_{L^{\infty}(B_{\lambda^{k}})}\\
&&+\frac{C_{2}}{\lambda^{2}}((\omega_{1}(\lambda^{k})
+\tau\lambda^{k})(T+2|G_{k}|+\frac{\tau}{\Lambda}|F_{k}|)+\omega_{2}(\lambda^{k})|F_{k}|).
\end{eqnarray*}
Then we have
\begin{eqnarray*}
M_{k+1}&=&\frac{\|u-v-P_{k+1}+\vec{b}(0)\cdot F_{k+1}\frac{x_{1}^{2}}{2a_{11}(0)}\|_{L^{\infty}(B_{\lambda^{k+1}})}}{\lambda^{2(k+1)}}\\
&\leq&\frac{\|u-v-P_{k+1}+\vec{b}(0)\cdot F_{k}\frac{x_{1}^{2}}{2a_{11}(0)}\|_{L^{\infty}(B_{\lambda^{k+1}})}}{\lambda^{2(k+1)}}
+\frac{\|\vec{b}(0)\cdot F_{k}\frac{x_{1}^{2}}{2a_{11}(0)}-\vec{b}(0)\cdot F_{k+1}\frac{x_{1}^{2}}{2a_{11}(0)}\|_{L^{\infty}(B_{\lambda^{k+1}})}}
{\lambda^{2(k+1)}}\\
&\leq&\frac{C_{1}(\lambda^{3}+(\omega_{1}(\lambda^{k})+
\tau\lambda^{k})^{\alpha})}{\lambda^{2}}\frac{\|u-v-P_{k}+\vec{b}(0)\cdot F_{k}\frac{x_{1}^{2}}
{2a_{11}(0)}\|_{L^{\infty}(B_{\lambda^{k}})}}{\lambda^{2k}}\\
&&+\frac{C_{2}}{\lambda^{2}}\|f(x,u)-f(x,0)\|_{L^{\infty}(B_{\lambda^{k}})}\\
&&+\frac{C_{2}}{\lambda^{2}}((\omega_{1}(\lambda^{k})
+\tau\lambda^{k})(T+2|G_{k}|+\frac{\tau}{\Lambda}|F_{k}|)+\omega_{2}(\lambda^{k})|F_{k}|)
+\frac{\tau}{2\Lambda}|F_{k+1}-F_{k}|\\
&=&\xi_{k}M_{k}+\eta_{k},
\end{eqnarray*}
and
$$|E_{k+1}-E_{k}|=\lambda^{2k}|E|\leq C_{0}\lambda^{2k}M_{k},$$
$$|F_{k+1}-F_{k}|=\lambda^{2k}\left|\frac{F}{\lambda^{k}}\right|\leq C_{0}\lambda^{k}M_{k},
$$
$$|G_{k+1}-G_{k}|=\lambda^{2k}\left|\frac{G}{\lambda^{2k}}\right|\leq C_{0}M_{k}.
$$
This implies the conclusion is true for $k+1$. Thus we finish to prove the claim.

\
\

{\bf Step 2:} Prove that $\sum\limits_{k=0}^{\infty}M_{k}<\infty$.

\
\

For $k\geq0,$ we suppose $S_{k}=\sum\limits_{i=0}^{k}M_{i}.$ By {\bf Step 1}, noting that $E_{0}=|F_{0}|=|G_{0}|=0$, $M_{0}=\|u-v\|_{L^{\infty}(B_{1})}$, then for any $k\geq0,$ we have
\begin{equation}\label{gk}
|G_{k+1}|\leq |G_{k}|+C_{0}M_{k}\leq C_{0}S_{k},
\end{equation}
\begin{equation}\label{fk}
|F_{k+1}|\leq |F_{k}|+C_{0}\lambda^{k}M_{k}\leq C_{0}S_{k},
\end{equation}
\begin{equation}\label{ek}
|E_{k+1}|\leq |E_{k}|+C_{0}\lambda^{2k}M_{k}\leq C_{0}S_{k},
\end{equation}
$$\xi_{k}\leq2C_{1}\lambda\leq \frac{1}{4},
$$
where $(\omega_{1}(1)+
\tau)^{\alpha}\leq\lambda^{3}
$ is used. Then the iteration formula $(\ref{induction1})$ implies that
$$M_{k+1}\leq\xi_{k}M_{k}+\eta_{k}\leq\frac{1}{4}M_{k}+\eta_{k}.
$$
Now we estimate $\eta_{k}$. By {\bf Assumption \ref{as1}}, we have
\begin{equation}\label{m2-1}
\begin{aligned}
\eta_{k}&\leq\frac{C_{2}}{\lambda^{2}}\left(\varphi(\|u\|_{L^{\infty}(B_{\lambda^{k}})})
+(\omega_{1}(\lambda^{k})+\tau\lambda^{k})\left(T+2|G_{k}|+\frac{\tau}{\Lambda}|F_{k}|\right)
+\omega_{2}(\lambda^{k})|F_{k}|\right)\\
&\quad+\frac{\tau}{2\Lambda}|F_{k+1}-F_{k}|.
\end{aligned}
\end{equation}
Since $\|b_{i}\|_{L^{\infty}(B_{1})}\leq\tau$, obviously $\|b_{i}\|_{L^{q}(B_{1})}\leq\tau |B_{1}|^{\frac{1}{q}}$. For $a_{ij}$, we can calculate that
$$\|a_{ij}-a_{ij}(0)\|_{L^{n}(B_{r})}\leq\omega_{1}(1)|B_{r}|^{\frac{1}{n}},\quad\forall~0<r\leq1.
$$
Thus Theorem $\ref{mr1}$ implies that $u$ is $C^{1}$ at $0$, and
$$\|u(x)\|_{L^{\infty}(B_{\lambda^{k}})}=\|u(x)-u(0)\|_{L^{\infty}(B_{\lambda^{k}})}\leq C^{*}\lambda^{k}.
$$
where $C^{*}$ is a constant depending only on $n,\Lambda,\omega_{1}(1),\tau$ and $T$. Recalling $(\ref{diniproperty})$ we have
\begin{eqnarray*}
\varphi(\|u\|_{L^{\infty}(B_{\lambda^{k}})})\leq\varphi\left(C^{*}\lambda^{k}\right)\leq2C^{*}\varphi(\lambda^{k}).
\end{eqnarray*}
By substituting the above inequality and $(\ref{gk})$, $(\ref{fk})$, $(\ref{ek})$ into $(\ref{m2-1})$, we obtain for $k\geq1$,
\begin{eqnarray*}
\eta_{k}&\leq&\frac{C_{2}}{\lambda^{2}}\left(2C^{*}\varphi(\lambda^{k})
+(\omega_{1}(\lambda^{k})+\tau\lambda^{k})\left(T+2|G_{k}|+\frac{\tau}{\Lambda}|F_{k}|\right)
+\omega_{2}(\lambda^{k})|F_{k}|\right)+\frac{\tau}{2\Lambda}|F_{k+1}-F_{k}|\\
&\leq&\frac{2C^{*}C_{2}}{\lambda^{2}}\varphi(\lambda^{k})+\frac{C_{2}}{\lambda^{2}}\left(
(\omega_{1}(\lambda^{k})+\tau\lambda^{k})\left(T+2C_{0}S_{k}+\frac{\tau}{\Lambda}C_{0}S_{k}\right)
+\omega_{2}(\lambda^{k})C_{0}S_{k}\right)+\frac{\tau}{2\Lambda}C_{0}M_{k}\\
&\leq&\frac{2C^{*}C_{2}}{\lambda^{2}}\varphi(\lambda^{k})+\frac{C_{2}T}{\lambda^{2}}
(\omega_{1}(\lambda^{k})+\tau\lambda^{k})+\frac{C_{2}C_{0}(2+\displaystyle\frac{\tau}{\Lambda})}{\lambda^{2}}
(\omega_{1}(\lambda^{k})+\omega_{2}(\lambda^{k})+\tau\lambda^{k})S_{k}+\frac{1}{4}S_{k}
\end{eqnarray*}
since $\displaystyle\frac{\tau C_{0}}{2\Lambda}\leq\frac{1}{4}$. To continue, the proof is similar to the {\bf Step 2} in the proof of Theorem $\ref{mr1}$, we omit it.
Therefore $\left\{S_{k}\right\}_{k=0}^{\infty}$ is bounded and consequently $\left\{S_{k}\right\}_{k=0}^{\infty}$ and $\sum\limits_{k=0}^{\infty} M_{k}$ are convergent. Moreover, from $(\ref{gk})$, $(\ref{fk})$ and $(\ref{ek})$, we know that $\left\{G_{k}\right\}_{k=0}^{\infty}$, $\left\{F_{k}\right\}_{k=0}^{\infty}$ and $\left\{E_{k}\right\}_{k=0}^{\infty}$ are bounded. It is also clear that $\lim\limits_{k\rightarrow\infty}M_{k}=0$.

\
\

{\bf Step 3:} Prove that $\lim\limits_{k \rightarrow+\infty} E_{k} $, $\lim\limits_{k \rightarrow+\infty} F_{k} $ and $\lim\limits_{k \rightarrow+\infty} G_{k} $ exist.

\
\

Actually, it is clear that
$$
\sum_{k=0}^{\infty}|G_{k+1}-G_{k}| \leq C_{0}\sum_{k=0}^{\infty}M_{k}<\infty.
$$
Therefore $\left\{G_{k}\right\}_{k=0}^{\infty}$ is a Cauchy sequence, and consequently $\lim\limits_{k \rightarrow+\infty}G_{k}$ exists. We set $\overline{G}:=$
$\lim\limits_{k \rightarrow+\infty} G_{k} $. Similarly, we can set $\overline{F}:=$
$\lim\limits_{k \rightarrow+\infty} F_{k} $, $\overline{E}:=$
$\lim\limits_{k \rightarrow+\infty} E_{k} $. Then we denote $\overline{P}(x)=\overline{E}+\overline{F}\cdot x+x^{T}\overline{G}x$.

\
\

{\bf Step 4:} Prove $u-v$ is $C^{2}$ at 0.

\
\

It is sufficient to show that there exists a sequence $\{N_{k}\}$ satisfying that $\lim\limits_{k \rightarrow \infty} N_{k}=0$ and
\begin{equation}\label{final}
\frac{\left\|u-v-\overline{P}+\vec{b}(0)\cdot\overline{F}\frac{x_{1}^{2}}{2a_{11}(0)}\right\|_{L^{\infty}(B_{\lambda^{k}})}}{\lambda^{2k}} \leq N_{k}.
\end{equation}
In fact, to show $(\ref{final})$, we notice that for any $k \geq 0$,
\begin{eqnarray*}
\frac{\left\|u-v-\overline{P}+\vec{b}(0)\cdot\overline{F}\frac{x_{1}^{2}}{2a_{11}(0)}\right\|_{L^{\infty}(B_{\lambda^{k}})}}{\lambda^{2k}}
&\leq&\frac{\left\|u-v-P_{k}+\vec{b}(0)\cdot F_{k}\frac{x_{1}^{2}}{2a_{11}(0)}\right\|_{L^{\infty}(B_{\lambda^{k}})}}{\lambda^{2k}}\\
&&+\frac{\left\|P_{k}-\overline{P}\right\|_{L^{\infty}(B_{\lambda^{k}})}}{\lambda^{2k}}
+\frac{\left\|\vec{b}(0)\cdot(F_{k}-\overline{F})\frac{x_{1}^{2}}{2a_{11}(0)}\right\|_{L^{\infty}(B_{\lambda^{k}})}}{\lambda^{2k}}.
\end{eqnarray*}
By the definition of $M_{k}$ we get
\begin{eqnarray*}
\frac{\left\|u-v-\overline{P}+\vec{b}(0)\cdot\overline{F}\frac{x_{1}^{2}}{2a_{11}(0)}\right\|_{L^{\infty}(B_{\lambda^{k}})}}{\lambda^{2k}}
&\leq& M_{k}+\frac{|E_{k}-\overline{E}|+|F_{k}-\overline{F}|\lambda^{k}+|G_{k}-\overline{G}|\lambda^{2k}}{\lambda^{2k}}
+\frac{\tau}{2\Lambda}|F_{k}-\overline{F}|\\
&\leq& M_{k}+|G_{k}-\overline{G}|+\frac{|F_{k}-\overline{F}|}{\lambda^{k}}+\frac{|E_{k}-\overline{E}|}{\lambda^{2k}}
+\frac{\tau}{2\Lambda}|F_{k}-\overline{F}|.
\end{eqnarray*}
We set $$N_{k}=M_{k}+|G_{k}-\overline{G}|+\displaystyle\frac{|F_{k}-\overline{F}|}{\lambda^{k}}+\displaystyle\frac{|E_{k}-\overline{E}|}{\lambda^{2k}}
+\displaystyle\frac{\tau}{2\Lambda}|F_{k}-\overline{F}|$$.
Then by using $(\ref{fk})$, we have
\begin{eqnarray*}
|F_{k}-\overline{F}|&\leq&|F_{k}-F_{k+1}|+|F_{k+1}-F_{k+2}|+\cdots\\
&\leq& C_{0}\left(\lambda^{k}M_{k}+\lambda^{k+1}M_{k+1}+\cdots\right)\\
&\leq& C_{0}\lambda^{k}\sum\limits_{i=k}^{\infty}M_{i}.
\end{eqnarray*}
It follows that
$$\frac{|F_{k}-\overline{F}|}{\lambda^{k}}\leq C_{0}\sum\limits_{i=k}^{\infty}M_{i}.
$$
Since $\sum\limits_{i=0}^{\infty}M_{i}$ is convergent, then we have $\lim\limits_{k\rightarrow\infty}\displaystyle\frac{|F_{k}-\overline{F}|}{\lambda^{k}}=0$. Similarly we have $\lim\limits_{k\rightarrow\infty}\displaystyle\frac{|E_{k}-\overline{E}|}{\lambda^{2k}}=0$. Consequently we obtain that $\lim\limits_{k\rightarrow\infty}N_{k}=0$.

\
\

{\bf Step 5:} Prove $u$ is $C^{1,1}$ at 0.

\
\

From {\bf Step 4}, we know that $u-v$ is $C^{2}$ at 0. Since $v$ is $C^{1,1}$ at 0, then $u$ is $C^{1,1}$ at 0.


\begin{thebibliography}{99}
\bibitem{ALS}
Andersson, J., Lindgren, E., Shahgholian, H.: Optimal regularity for the no-sign obstacle problem. \emph{Comm. Pure Appl. Math.}, \textbf{66}, 245--262(2013)

\bibitem{BL2001}
Blank, I.: Sharp results for the regularity and stability of the free boundary in the obstacle problem. \emph{Indiana Univ. Math. J.}, \textbf{50}, 1077--1112(2001)

\bibitem{Bu} Burch, C. C.: The Dini condition and regualarity of weak solutions of elliptic equations. \emph{J. Differential  Equations}, \textbf{30}, 308--323(1978)


\bibitem{Ca}Caffarelli, L. A.: Elliptic second order equations. \emph{Rend. Sem. Mat. Fis. Milano.}, \textbf{58}, 253--284(1988)

\bibitem{Ca1989}
Caffarelli, L. A.: Interior a priori estimates for solutions of fully nonlinear equations. \emph{Ann. of Math.}, \textbf{130}, 189--213(1989)

\bibitem{Ca1998}
Caffarelli, L. A.: The obstacle problem revisited. \emph{J. Fourier Anal. Appl.}, \textbf{4}, 383--402(1998)

\bibitem{CCKS1996}
Caffarelli, L. A., Crandall, M. G., Kocan, M., \'{S}wiech, A.: On viscosity solutions of fully nonlinear equations with measurable ingredients. \emph{Comm. Pure Appl. Math.}, \textbf{49}, 365-397(1996)

\bibitem{DK2017}
Dong, H. J., Kim, S.: On $C^{1}$, $C^{2}$ and weak type-(1,1) estimates for linear elliptic operators.
\emph{Comm. Partial Differential Equations}, \textbf{42}, 417--435(2017)

\bibitem{DEK2018}
Dong, H. J., Escauriaza, L., Kim, S.: On $C^{1}$, $C^{2}$ and weak type-(1,1) estimates for linear elliptic operators: part II. \emph{Math. Ann.}, \textbf{370}, 447--489(2018)

\bibitem{DT}
Gilbarg, D., Trudinger, N.: Elliptic partial differential equations of second
order. 2nd Edition, Springer-Verlag, Berlin, 1983

\bibitem{4}
Huang, Y. P., Li, D. S., Wang, L. H.: Boundary behavior of solutions of elliptic equations in nondivergence form. \emph{Manuscripta Math.}, \textbf{143}, 525--541(2014)

\bibitem{5}
Huang, Y. P., Li, D. S., Wang, L. H.: A note on boundary differentiability of solutions of elliptic equations in nondivergence form. \emph{Manuscripta Math.}, \textbf{162}, 305--313(2020)

\bibitem{HZZ2019}
Huang, Y. P., Zhai, Q. Z., Zhou, S. L.: Boundary regularity for nondivergence elliptic equation with unbounded drift, \emph{Electron. J. Differential Equations}, (2019)

\bibitem{1}
Indrei, E., Minne, A., Nurbekyan, L.: Regularity of solutions in semilinear elliptic theory. \emph{Bull. Math. Sci.}, \textbf{7}, 177--200(2017)

\bibitem{KN15}
Koch, H., Nadirashvili, N.: Partial analyticity and nodal sets for nonlinear elliptic systems. arXiv:1506.06224 (2015)

\bibitem{K1997}
Kovats, J.:  Fully nonlinear elliptic equations and the Dini condition. \emph{Comm. Partial Differential Equations}, \textbf{22}, 1911--1927(1997)

\bibitem{7}
Kovats, J.: Dini-Campanato spaces and applications to nonlinear elliptic equations. \emph{Electron. J. Differential Equations}, \textbf{1999}, 1--20(1999)

\bibitem{13}
Li, D. S., Wang, L. H.: Boundary differentiability of solutions of elliptic equations on convex domains. \emph{Manuscripta Math.}, \textbf{121}, 137--156(2006)

\bibitem{14}
Li, D. S., Wang, L. H.: Elliptic equations on convex domains with nonhomogeneous Dirichlet boundary conditions. \emph{J. Differential Equations}, \textbf{246}, 1723--1743(2009)

\bibitem{15}
Li, D. S., Zhang, K.: An optimal geometric condition on domains for boundary differentiability of solutions of elliptic equations. \emph{J. Differential Equations}, \textbf{254}, 3765--3793(2013)

\bibitem{LWZarxiv}
Lian, Y. Y., Wang, L. H., Zhang, K.: Pointwise regularity for fully nonlinear elliptic equations in general forms. arXiv: 2012.00324, (2020)

\bibitem{18}
Lian, Y. Y., Xu, W. X., Zhang, K.: Boundary Lipschitz regularity and the Hopf lemma on Reifenberg domains for fully nonlinear elliptic equations. \emph{Manuscripta Math.}, \textbf{166}, 343-357(2021)

\bibitem{LWZ2023}
Liang, J. Q., Wang, L. H., Zhou, C. Q.: Boundary Lipschitz regularity of solutions for semilinear elliptic equations in divergence form. \emph{Acta Math. Sin. (Engl. Ser.)}, \textbf{39}, 193--208(2023)

\bibitem{LWZ2022}
Liang, J. Q., Wang, L. H., Zhou, C. Q.: Boundary Lipschitz regularity of solutions for general semilinear elliptic equations in divergence form. \emph{Nonlinear Anal.}, \textbf{223}, (2022)

\bibitem{16}
Ma, F. Y., Moreira, D., Wang, L. H.: Differentiability at lateral boundary for fully nonlinear parabolic equations, \emph{J. Differential Equations}, \textbf{263}, 2672--2686(2017)

\bibitem{3}
Ma, F. Y., Wang, L. H.: Boundary first order derivative estimates for fully nonlinear elliptic equations. \emph{J. Differential Equations}, \textbf{252}, 988--1002(2012)

\bibitem{N2019}
Nornberg, G.: $C^{1,\alpha}$ regularity for fully nonlinear elliptic equations with superlinear growth in the gradient. \emph{J. Math. Pures Appl.}, \textbf{128}, 297-329(2019)

\bibitem{PS2007}
Petrosyan, A., Shahgholian, H.: Geometric and energetic criteria for the free boundary regularity in an obstacle-type problem. \emph{Amer. J. Math.}, \textbf{129}, 1659--1688(2007)

\bibitem{6}
Shahgholian, H.: $C^{1,1}$ regularity in semilinear elliptic problems. \emph{Comm. Pure Appl. Math.}, \textbf{56}, 278--281(2003)

%
\bibitem{S2010}
Safonov, M. V.: Non-divergence elliptic equations of second order with unbounded drift.
\emph{Nonlinear Partial Differential Equations and Related Topics, AMS Transl.}, \textbf{229}, 211--232(2010)

\bibitem{W}
Wang, L. H.: On the regularity theory of fully nonlinear parabolic equations. I. \emph{Comm. Pure Appl. Math.}, \textbf{45}, 27--76(1992)

\bibitem{2}
Zou, X., Chen, Y. Z.: Fully nonlinear parabolic equations and the Dini condition. \emph{Acta Math. Sin. (Engl. Ser.)}, \textbf{18}, 473--480(2002)
\end{thebibliography}
\end{document}